\crefname{hypothesis}{Hypothesis}{Hypotheses}
\crefname{fact}{Fact}{Facts}
\title{Quotient Manifold Optimization for Spectral Compressed Sensing\thanks{Submitted to the editors DATE.
\funding{Zai Yang was supported by the National Natural Science Foundation of China under Grant 12371464. Wen Huang was partially supported by the National Natural Science Foundation of China (No. 12371311), the Natural Science Foundation of Fujian Province (No. 2023J06004), and the Fundamental Research Funds for the Central Universities (No. 20720240151). (Corresponding author: Zai Yang)}
}}
\author{Wenlong Wang\thanks{School of Mathematics and Statistics, Xi'an Jiaotong University, Xi'an 710049, China
  (\email{wang1813857265@stu.xjtu.edu.cn}, \email{yangzai@xjtu.edu.cn}).}
\and Wen Huang\thanks{School of Mathematical Sciences, Xiamen University, Xiamen 361005, China 
  (\email{wen.huang@xmu.edu.cn}).}
\and Zai Yang\footnotemark[2]}
\DeclareMathOperator{\diag}{diag}
\def\st{\text{subject to }}
\def\m #1{\boldsymbol{#1}}
\def\bee{\begin{equation}}
\def\ene{\end{equation}}
\def\beq{\begin{eqnarray}}
\def\enq{\end{eqnarray}}
\newtheorem{thm}{Theorem}
\newtheorem{prop}{Proposition}
\def\diag #1{\text{diag}#1}
\def\tr #1{\text{tr}#1}
\def\rank #1{\text{rank}#1}
\def\st {\text{ subject to }}
\begin{document}
\nolinenumbers
\maketitle
\begin{abstract}Spectral compressed sensing involves reconstructing a spectral-sparse signal from a subset of uniformly spaced samples, with applications in radar imaging and wireless channel estimation. By fully exploiting the signal structures, this problem is formulated as a rank-constrained semidefinite program subject to Hankel-Toeplitz structural constraints in our previous work. To further enhance computational efficiency, this paper proposes a quotient-manifold-based optimization framework that leverages the underlying Riemannian geometry in a matrix factorization space. Specifically, we establish an equivalence between spectral-sparse signals and matrix equivalence classes under the action of the real orthogonal group, where each class member corresponds to a rank-constrained positive-semidefinite Hankel-Toeplitz structured matrix. The associated quotient manifold geometry—including the Riemannian metric, horizontal space, retraction, and vector transport—is rigorously derived. Based on these results, we develop a Riemannian conjugate gradient descent algorithm, where each iteration is efficiently implemented using fast Fourier transforms (FFTs) by exploiting the Hankel and Toeplitz structures. Extensive numerical experiments demonstrate the superior performance of the proposed algorithm in both computational speed and accuracy compared to state-of-the-art methods.

\end{abstract}

\begin{keywords}
Spectral compressed sensing, Hankel-Toeplitz matrix optimization, quotient manifold, Riemannian conjugate gradient descent.
\end{keywords}

\begin{MSCcodes}
15A29, 15A83, 90C26, 94A12
\end{MSCcodes}

\section{Introduction}
\label{Intro}
Spectral compressed sensing aims to recover a spectral-sparse signal and estimate its spectral components (a.k.a., the continuous-valued frequencies and their complex-valued amplitudes) from a subset of its uniformly spaced samples~\cite{stoica2005spectral,duarte2013spectral,candes2014towards}. It is an extension of the prominent compressed sensing problem by defining the sparse signal on a continuous interval and modifying the sensing matrix as a continuous-time Fourier transform with sampling. In the case of full uniform samples, it is usually known as line spectral estimation~\cite{stoica2005spectral}, harmonic retrieval~\cite{gorodnitsky1997sparse}, and spectral super-resolution~\cite{polisano2019convex,chen2001atomic,candes2014towards}. In this paper, we emphasize the challenging scenario of partial samples or compressive measurements that arises in various applications such as array signal processing~\cite{yang2018sparse,10144727}, radar imaging~\cite{cuyt2020sparse}, wireless channel estimation~\cite{tsai2018millimeter}, and health monitoring~\cite{wilber2022data,duan2014compressed}.

The research of spectral compressed sensing has a long history, with a primary focus on designing advanced algorithms to enhance accuracy and computational efficiency. A central challenge lies in the nonlinear relationship between the observed data and the underlying frequencies. Prony's method, dating to 1795, establishes a connection between the frequency parameters and the roots of a polynomial, solving for them by constructing a system of linear equations~\cite{osborne1995modified,de1795essai}. The discrete Fourier transform (DFT) converts a finite sequence of uniformly spaced time-domain samples into the frequency domain to extract the dominant spectral components~\cite{sundararajan2001discrete}, and its fast realization, the fast Fourier transform (FFT), is honored as one of the top 10 algorithms of the 20th century~\cite{dongarra2000guest}. Subspace-based methods, such as multiple signal classification (MUSIC)~\cite{stoica1989music} and estimation of signal parameters via rotational invariant techniques (ESPRIT)~\cite{roy1989esprit}, estimate the frequencies using the signal or noise subspace obtained from the sample covariance matrix, given prior knowledge on the number of spectral components. However, the performance of these methods deteriorates in the case of compressive samples~\cite{pillai1989forward}.

Following the seminal works of compressed sensing~\cite{donoho2006compressed}, sparse optimization methods have been extensively studied for the deeply connected topic of spectral compressed sensing, which resolves naturally the issue of compressive measurements in an optimization framework. In fact, the work of Candes, Romberg and Tao~\cite{candes2006robust} studies exactly the problem of spectral compressed sensing, which can be implied by the title of~\cite{candes2006robust}, but with discrete-valued frequencies (as opposed to the continuous setting of this paper). The discretization causes a fundamental off-grid problem between the true frequencies and the discretization grid and has raised major concerns regarding algorithm design and theoretical analysis~\cite{yang2018sparse,yang2012off,hu2013fast,tan2014joint}. While various off-grid sparse methods have been proposed to mitigate the effect of the off-grid problem, it remains unresolved until the seminal works of Candes  and Recht and their collaborators~\cite{candes2013super,candes2014towards,tang2013compressed,bhaskar2013atomic}, leading to gridless sparse methods. In these papers, the widely used $\ell_1$ norm optimization in compressed sensing is extended successfully to the continuous-frequency setting and then computed exactly (without discretization) using semidefinite programming (SDP). In particular, the continuous analog of  $\ell_1$ norm is called the atomic norm or the total variation norm. It is computed, by applying the Carath\'eodory-Fej\'er Theorem on positive-semidefinite (PSD) Toeplitz matrices~\cite{caratheodory1911zusammenhang}, as the trace norm of a PSD block matrix consisting of a Toeplitz matrix to optimize. Moreover, a continuous analog of the $\ell_0$ pseudo-norm, called the atomic $\ell_0$ norm, is also developed as the rank of the aforementioned PSD block matrix, which will be referred to as the low-rank PSD Toeplitz model hereafter. As in compressed sensing, the atomic norm is a convex relaxation of the atomic $\ell_0$ norm.

The convex atomic norm approaches offer rigorous performance guarantees; however, they still suffer from major limitations. First, these convex relaxation methods require the spectral components to be sufficiently separated for successful recovery, which inevitably imposes a resolution limit~\cite{candes2014towards,tang2015resolution,yang2024separation,candes2013super}. Second, solving the SDPs arising from atomic norm formulations entails high computational complexity, limiting the practical deployment of these methods in real-time applications such as radar and wireless communication systems. To address the resolution limitation, a reweighted atomic norm method is studied in~\cite{yang2015enhancing} which however further increases the computational burden. To reduce the computational complexity, efficient algorithms such as those based on the alternating direction method of multipliers (ADMM) have been developed~\cite{10510492,bhaskar2013atomic,7080862}. Nevertheless, solving SDPs remains computationally demanding due to large-scale matrix multiplications and eigenvalue decompositions (EDs). This paper aims to propose one algorithm that resolves these limitations simultaneously.

In this paper, building on the authors' previous work~\cite{wu2022maximum}, we formulate the reconstruction of a spectral-sparse signal as a rank-constrained SDP with Hankel-Toeplitz structural constraints which fully exploits the signal structures. Inspired by recent advances in low-rank matrix recovery, we propose a Riemannian optimization algorithm to solve this problem efficiently. Our main contributions are summarized as follows.
\begin{itemize}
\item We show that a spectral-sparse signal uniquely corresponds to an equivalence class under the action of real orthogonal group, where each matrix in the class is associated with a rank-constrained PSD Hankel-Toeplitz matrix. We prove that the space of these equivalence classes forms a quotient manifold, thereby reformulating spectral compressed sensing as an optimization problem on this manifold. We establish the geometric structures required for Riemannian optimization, including the Riemannian metric, horizontal space, retraction, and vector transport.
\item We develop a Riemannian conjugate gradient descent algorithm for the quotient manifold optimization problem, referred to as the Hankel-Toeplitz Riemannian Conjugate Gradient Descent (HT-RCGD) algorithm. We present an efficient implementation of HT-RCGD by using FFTs and analyze its computational complexity, demonstrating its low per-iteration cost. Moreover, we provide a convergence guarantee for the algorithm.
\end{itemize}
We conduct extensive numerical experiments to evaluate the performance of HT-RCGD. The results demonstrate its superiority over existing methods in terms of convergence rate and phase transition behavior, confirming its effectiveness for spectral compressed sensing.

\subsection{Related work}
Spectral compressed sensing is formulated in~\cite{andersson2014new,holland2011fast,cai2023structured} as a low-rank Hankel matrix recovery problem when the undamped nature of the signal is disregarded, leading to several nonconvex algorithms such as Hankel-based PGD (H-PGD)~\cite{cai2018spectral}, symmetric Hankel PGD (SH-PGD)~\cite{li2024projected}, and Hankel-based low-rank projected proximal gradient (H-LPPG)~\cite{yao2025low}. However, it is shown in~\cite{yang2024new} that the low-rank Hankel model introduces new damping factors and thus cannot utilize the full signal structures. This relaxation leads to suboptimal parameter identifiability and higher estimation error~\cite{wu2022maximum}. While a low-rank double-Hankel model is proposed in~\cite{yang2024new} to alleviate this issue, it remains inexact and may limit achievable estimation accuracy. In contrast, this paper adopts the low-rank PSD Hankel-Toeplitz model, which provides an exact characterization of spectral sparsity. Inspired by the advances in fast algorithms for low-rank Hankel recovery, the proposed HT-RCGD algorithm leverages FFTs to exploit the circulant structures of Hankel and Toeplitz matrices for efficient computation. Moreover, the recent success in manifold optimization for Hankel recovery~\cite{bian2024preconditioned-SAM} motivates our integration of quotient geometry to enhance Hankel-Toeplitz optimization.

Several studies have been attempted to solve Toeplitz-based optimization models for spectral compressed sensing. However, the low-rank Toeplitz representation in~\cite{tang2013compressed,7451201}, whose convex relaxation is linked to atomic norm minimization, has been shown to be unsuitable for nonconvex optimization, as it is neither unique nor bounded, potentially causing convergence issues~\cite{wu2022maximum}. In contrast, the PSD Hankel-Toeplitz representation used in this paper offers a unique and exact characterization of spectral-sparse signals. A matrix factorization approach for this model is proposed in~\cite{wu2024fast}, where the constrained optimization problem is reformulated into an equivalent unconstrained problem and solved using projected gradient descent, referred to as Hankel-Toeplitz projected gradient descent (HT-PGD). In HT-PGD, the projection step ensures the iterates remain within the solution set to guarantee convergence. This paper improves upon HT-PGD by leveraging quotient geometry to eliminate factorization redundancy and employing a Riemannian conjugate gradient descent algorithm to enhance iteration efficiency. Additionally, a regularization term is introduced into the objective function to ensure convergence, thereby avoiding the need for explicit projection.

Riemannian manifold optimization has been employed to study the general low-rank matrix recovery problem~\cite{vandereycken2013low, bioli2025preconditioned, absil2008optimization, bian2024preconditioned, zheng2022riemannian, wei2016guarantees}. These techniques represent low-rank matrices using compact variables through quotient geometry~\cite{huang2017solving, mishra2014riemannian} or embedded geometry~\cite{vandereycken2013low, wei2016guarantees}, addressing the redundancy and non-uniqueness inherent in matrix factorization representations used in Euclidean optimization methods~\cite{tanner2016low,sun2016guaranteed}. By leveraging the geometric structure of low-rank or low-rank PSD matrix manifolds, advanced algorithms such as Riemannian conjugate gradient descent have been developed, reducing per-iteration computational complexity and demonstrating improved convergence behavior~\cite{zheng2022riemannian, wei2016guarantees}. In this paper, we observe that the factorization of a rank-constrained PSD Hankel-Toeplitz matrix induces an equivalence class under the action of real orthogonal group, and the space of these equivalence classes forms a quotient manifold. Building on this insight, we employ an advanced Riemannian conjugate gradient descent algorithm to efficiently solve the corresponding quotient manifold optimization problem.

\subsection{Notations and organization}

Notations used in this paper are as follows. Bold face letters denote vectors and matrices. The sets of real and complex numbers are represented by $\mathbb{R}$ and $\mathbb{C}$, respectively. Given a vector $\m{x}$, its transpose, complex conjugate, conjugate transpose, and $l_2$ norm are denoted as $\m{x}^\top$, $\overline{\m{x}}$, $\m{x}^H$ and $||\m{x}||_2$, respectively. For matrix $\m{X}$, $\m{X}^\top$, $\overline{\m{X}}$, $\m{X}^H$, $\m{X}^{-1}$, $\m{X}^{\dag}$, $\left\|\m{X}\right\|_\text{F}$, $\rank\left(\m{X}\right)$ and $\tr\left(\m{X}\right)$ denote its transpose, complex conjugate, conjugate transpose, inverse, Moore-Penrose pseudo inverse, Frobenius norm, rank, and trace, respectively. The real part of a number or a matrix is denoted by $\Re\left\{\cdot\right\}$. The real inner product of matrices $\m{X}$ and $\m{Y}$ is defined as $\left\langle\m{X},\m{Y}\right\rangle=\Re\left\{\tr(\m{X}^H\m{Y})\right\}$. For an integar $N>0$, the notation $1:N$ represents the set $\left\{1,2,\dots,N\right\}$. The $j$-th entry of vector $\m{x}$ is $x_j$, and the $\left(i_1,i_2\right)$ entry of matrix $\m{X}$ is $X_{i_1,i_2}$. The $i$-th row and column of a matrix $\m{X}$ are denoted as $\m{X}_{i,:}$ and $\m{X}_{:,i}$, respectively. $\m{X}\geq\m{0}$ means that $\m{X}$ is PSD. $\m{X}>\m{0}$ means that $\m{X}$ is Hermitian positive-definite. The diagonal matrix with vector $\m{x}$ on its diagonal is represented as $\diag\left(\m{x}\right)$. The notation $|\cdot|$ denotes the absolute value of a scalar, the determinant of a matrix, or the cardinality of a set. The identity matrix of size $N$ is denoted as $\m{I}_N$. $\mathcal{I}$ denotes the identity operator. The adjoint of a linear operator $\mathcal{A}$ is denoted by $\mathcal{A}^*$. For a Riemannian manifold $\mathcal{M}$, the tangent space at $\m{p}\in\mathcal{M}$ is denoted as $\text{T}_{\m{p}}\mathcal{M}$. $\mathcal{O}^K$ denotes the real orthogonal group $\left\{\m{O}\in\mathbb{R}^{K\times K}:\;\m{O}\m{O}^\top=\m{O}^\top\m{O}=\m{I}_K\right\}$. The symbol $\oplus$ denotes the direct sum of linear spaces. The notation $\mathbb{C}^{p\times K}_*$ denotes the set $\left\{\m{Z}\in\mathbb{C}^{p\times K}:\;\rank\left(\m{Z}\right)=K\right\}$.

The remainder of this paper is organized as follows. Section \ref{Sec:Problem Formulation} introduces the Hankel-Toeplitz matrix optimization model for spectral compressed sensing. Section \ref{Sec:Quotient Manifold} reformulates the problem as an optimization over equivalence classes on a quotient manifold and establishes the corresponding geometric structures. Section \ref{Sec:HT-RCGD} presents the HT-RCGD algorithm in detail. Section \ref{Sec:simulation} presents numerical simulations, and Section \ref{Sec:conclusion} concludes the paper.

\section{Preliminaries}
\label{Sec:Problem Formulation}
In this section, we introduce the spectral compressed sensing problem and present the low-rank Hankel-Toeplitz model, which serves as the foundation of the proposed optimization problem and algorithm.

A spectral-sparse signal is modeled as
\begin{equation}
\begin{aligned}
y_{n}=\sum_{k=1}^Ks_ke^{j2\pi \left(n-1\right){f}_k},\;n=1,\cdots,N,
\end{aligned}
\end{equation}
where $N$ is the number of samples, $s_k$ is the coefficient of the $k$-th component, and $f_k$ denotes its frequency. Spectral compressed sensing aims to recover the frequency vector $\m{f}=\left[f_1,\dots,f_K\right]^\top$, the coefficient vector $\m{s}=\left[s_1,\dots,s_K\right]^\top$, and the complete signal $\m{y}=\left[y_1,\dots,y_N\right]^\top$ from the partial observations $\left\{y_n\right\}_{n\in\Omega}$, where $\Omega \subseteq \left\{1,\dots,N\right\}$.

The recovery of a spectral-sparse signal can be formulated as the following feasibility problem:
\begin{equation}
\label{feasibility-problem}
\begin{aligned}
\text{find}\:\m{y}\in\mathcal{S}_0,\st\text{$\mathcal{P}_\Omega\left(\m{y}\right)$ is given},
\end{aligned}
\end{equation}
where $\mathcal{S}_0$ denotes the set of spectral-sparse signals with $K$ distinct spectral components:
\begin{equation}
\label{spectrally-sparse-signal-set}
\begin{aligned}
\mathcal{S}_0=\Bigg\{\m{x}\in\mathbb{C}^N:\;&x_n=\sum_{k=1}^Ks_ke^{j2\pi \left(n-1\right)f_k},\;n=1,\cdots,N,\\
&s_{k}\neq0,k=1,\cdots,K,\\
&f_{k_1}\neq f_{k_2},\; k_1,k_2=1,\cdots,K,\;k_1\neq k_2\Bigg\},
\end{aligned}
\end{equation}
$\mathcal{P}_\Omega$ is the observation operator that preserves only the entries of $\m{y}$ indexed by $\Omega$, and $\left|\Omega\right|$ denotes its cardinality. To facilitate the ensuing construction of structured matrices, we assume $N$ is odd and define $p=\frac{N+1}{2}$. If $N$ is even, the $(N+1)$-th sample can be treated as missing. Notably, once the original signal $\m{y}$ is exactly reconstructed, the frequencies $\m{f}$ and coefficients $\m{s}$ can be determined efficiently, provided that $K<\left(N+1\right)/2$, see~\cite{yang2018sparse}. Since a feasible solution exists, problem~\eqref{feasibility-problem} can be reformulated as the following optimization problem:
\begin{equation}
\label{optimization-problem}
\begin{aligned}
\min_{\m{x}}\;\rho\left(\mathcal{P}_\Omega\left(\m{x}\right),\mathcal{P}_\Omega\left(\m{y}\right)\right),\st\m{x}\in\mathcal{S}_0,
\end{aligned}
\end{equation}
where $\rho\left(\cdot,\cdot\right)$ is a specified distance function satisfying $\rho\left(\mathcal{P}_\Omega\left(\m{x}\right),\mathcal{P}_\Omega\left(\m{y}\right)\right)>0$ if $\mathcal{P}_\Omega\left(\m{x}\right)\neq\mathcal{P}_\Omega\left(\m{y}\right)$. Solving~\eqref{optimization-problem} is challenging due to the nonlinear dependence of $\m{y}$ on the frequency vector $\m{f}$ in the parameterization of $\mathcal{S}_0$. To address this issue, various matrix optimization models have been developed to reformulate~\eqref{optimization-problem} into a more tractable form by embedding the spectral-sparse signal into a low-rank structured matrix~\cite{andersson2014new,holland2011fast,tang2013compressed,candes2006robust,wu2022maximum}.

It is shown in \cite[Theorem 1]{wu2022maximum} that if $K<p$, then the set $\mathcal{S}_0$ can be equivalently reparameterized using a rank-constrained PSD Hankel-Toeplitz matrix as
\begin{equation}
\label{set-HT}
\begin{aligned}
\mathcal{S}_{0}=\mathcal{S}_{\text{HT}}=\left\{\m{x}\in\mathbb{C}^N:\;\begin{bmatrix}\mathcal{T}\overline{\m{t}}&\mathcal{H}\overline{\m{x}}\\\mathcal{H}\m{x}&\mathcal{T}\m{t}\end{bmatrix}\in\mathcal{S}_+^K,\;\text{for some $\m{t}\in\mathbb{C}^N$}\right\},
\end{aligned}
\end{equation}
where $\mathcal{H}$ is a Hankel operator that maps $\m{x}\in\mathbb{C}^{2p-1}$ to a $p\times p$ Hankel matrix $\mathcal{H}\m{x}$ with its $\left(i_1,i_2\right)$ entry given by $x_{i_1+i_2-1}$, $\mathcal{T}$ is a Toeplitz operator that maps $\m{t}\in\mathbb{C}^{2p-1}$ to a $p\times p$ Toeplitz matrix $\mathcal{T}\m{t}$ with its $\left(i_1,i_2\right)$ entry given by $t_{i_1-i_2+p}$, and $\mathcal{S}_+^K$ denotes the set of PSD matrices of fixed rank $K$. Specifically, for any spectral-sparse signal $\m{x}\in\mathcal{S}_0$ with frequency vector $\m{f}$ and cofficient vector $\m{s}$, there exists a unique $\m{t}$ in~\eqref{set-HT} given by $\mathcal{T}\m{t}=\m{A}\left(\m{f}\right)\diag(\left|\m{s}\right|)\m{A}^H\left(\m{f}\right)$~\cite{wu2022maximum}, where $\left|\cdot\right|$ is applied element-wise to $\m{s}$ and
\begin{equation}
\begin{aligned}
\m{A}\left(\m{f}\right)=\begin{bmatrix}1&1&\cdots&1\\
e^{j2\pi f_1}&e^{j2\pi f_2}&\cdots&e^{j2\pi f_K}\\
\vdots&\vdots&\vdots&\vdots\\
e^{j2\pi f_1\left(p-1\right)}&e^{j2\pi f_2\left(p-1\right)}&\cdots&e^{j2\pi f_K\left(p-1\right)}\end{bmatrix}.
\end{aligned}
\end{equation}
Using the equivalence between spectral-sparse signals and low-rank PSD Hankel-Toeplitz matrices in~\eqref{set-HT}, we obtain the following reformulation:
\begin{equation}
\label{optimization-problem-HT}
\begin{aligned}
\min_{\m{x}\in\mathbb{C}^N,\m{t}\in\mathbb{C}^N}\;\rho\left(\mathcal{P}_\Omega\left(\m{x}\right),\mathcal{P}_\Omega\left(\m{y}\right)\right),\st\begin{bmatrix}\mathcal{T}\overline{\m{t}}&\mathcal{H}\overline{\m{x}}\\\mathcal{H}\m{x}&\mathcal{T}\m{t}\end{bmatrix}\in\mathcal{S}_+^K.
\end{aligned}
\end{equation}

An equivalent reformulation of~\eqref{optimization-problem-HT} as a Hankel-Toeplitz matrix factorization problem is presented in~\cite{wu2024fast}. This formulation leverages a matrix factorization $\m{Z}$ to exploit the low-rank PSD structure as
\begin{equation}
\label{optimzation-problem-HT-factorization}
\begin{aligned}
\begin{bmatrix}\mathcal{T}\overline{\m{t}}&\mathcal{H}\overline{\m{x}}\\\mathcal{H}\m{x}&\mathcal{T}\m{t}\end{bmatrix}=\begin{bmatrix}\overline{\m{Z}}\\\m{Z}\end{bmatrix}\begin{bmatrix}\overline{\m{Z}}\\\m{Z}\end{bmatrix}^H=\begin{bmatrix}\overline{\m{Z}}{\overline{\m{Z}}}^H&\overline{\m{Z}}\m{Z}^H\\\m{Z}{\overline{\m{Z}}}^H&\m{Z}\m{Z}^H\end{bmatrix},
\end{aligned}
\end{equation}
and expresses the signal $\m{x}$ in terms of $\m{Z}$ as
\begin{equation}
\label{optimal-x}
\begin{aligned}
\m{x}=\left(\mathcal{H}^*\mathcal{H}\right)^{-1}\mathcal{H}^*\left(\m{Z}\m{Z}^\top\right)=\m{D}^{-2}\mathcal{H}^*\left(\m{Z}\m{Z}^\top\right),
\end{aligned}
\end{equation}
where $\m{D}=\diag\left(\left[1,\sqrt{2},\cdots,\sqrt{p},\cdots,\sqrt{2},1\right]^\top\right)$, and the last equality holds since $\m{D}=\left(\mathcal{H}^*\mathcal{H}\right)^{\frac{1}{2}}$ when $N$ is odd. Therefore, the optimization problem~\eqref{optimization-problem-HT} reduces to the following formulation~\cite{wu2024fast}:
\begin{equation}
\label{optimzation-problem-HT-factorization-Z}
\begin{aligned}
\min_{\m{Z}\in\mathbb{C}^{p\times K}_*}\;&\rho\left(\mathcal{P}_\Omega\left(\m{D}^{-2}\mathcal{H}^*\left(\m{Z}\m{Z}^\top\right)\right),\mathcal{P}_\Omega\left(\m{y}\right)\right),\\
\st&\left(\mathcal{I}-\mathcal{G}_1\right)\left(\m{Z}\m{Z}^\top\right)=\m{0},\;\left(\mathcal{I}-\mathcal{G}_2\right)\left(\m{Z}\m{Z}^H\right)=\m{0},
\end{aligned}
\end{equation}
where $\mathcal{G}_1=\mathcal{H}\left(\mathcal{H}^*\mathcal{H}\right)^{-1}\mathcal{H}^*$ and $\mathcal{G}_2=\mathcal{T}\left(\mathcal{T}^*\mathcal{T}\right)^{-1}\mathcal{T}^*$ denote the orthogonal projections onto the Hankel and Toeplitz subspaces, respectively.

\section{Optimization Problem on Quotient Manifold}
\label{Sec:Quotient Manifold}

In this section, we reformulate the low-rank Hankel-Toeplitz matrix optimization problem for spectral compressed sensing as an optimization over equivalence classes. We show that the set of all such equivalence classes constitutes a quotient manifold, and subsequently establish its Riemannian geometry for use in algorithm design.  The main challenges are twofold: 1) constructing the quotient manifold structure to capture the non-uniqueness of matrix factorizations through equivalence classes under group action, which requires verifying the properties of the associated group action, a form that has not been previously studied; and 2) carrying out rigorous derivations to establish the Riemannian geometry, including the metric, horizontal space, retraction, and vector transport.


\subsection{Problem formulation}
\begin{sloppypar}
Our reformulation is motivated by the observation that the optimal solution $\m{Z}^*$ to the matrix factorization problem in~\eqref{optimzation-problem-HT-factorization-Z} is non-unique. To be specific, both $\m{Z}^*{\m{Z}^*}^\top$ and $\m{Z}^*{\m{Z}^*}^H$ remain invariant under the action of real orthogonal group. We have the following result.
\begin{thm}
\label{prop-x2Z}
Assume $K<p$. For any spectral-sparse signal $\m{x}\in\mathcal{S}_0$ with frequency vector $\m{f}$ and cofficient vector $\m{s}$, there exists a unique equivalence class $\left[\m{Z}\right]=\left\{\m{Z}\m{O}\in\mathbb{C}^{p\times K}_*:\;\m{O}\in\mathcal{O}^K\right\}$ such that for every $\m{Z}\in\left[\m{Z}\right]$ it holds true that $\mathcal{H}\m{x}=\m{Z}\m{Z}^\top$ and $\m{Z}\m{Z}^H$ is a Toeplitz matrix.
\end{thm}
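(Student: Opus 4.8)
The plan is to split the statement into the existence of a single representative and the orbit description of the full solution set, using throughout the uniqueness of the Toeplitz completion guaranteed by~\eqref{set-HT}.

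For existence I would exhibit an explicit factor. Abbreviating $\m{A}=\m{A}\left(\m{f}\right)$, the spectral-sparse form of $\m{x}$ gives $\mathcal{H}\m{x}=\m{A}\diag\left(\m{s}\right)\m{A}^\top$ by a direct entrywise computation, while~\eqref{set-HT} provides $\mathcal{T}\m{t}=\m{A}\diag\left(\left|\m{s}\right|\right)\m{A}^H$. Taking $\m{Z}_0=\m{A}\diag\left(\sqrt{\m{s}}\right)$ for any fixed entrywise square roots, I would verify $\m{Z}_0\m{Z}_0^\top=\m{A}\diag\left(\m{s}\right)\m{A}^\top=\mathcal{H}\m{x}$ and $\m{Z}_0\m{Z}_0^H=\m{A}\diag\left(\left|\m{s}\right|\right)\m{A}^H=\mathcal{T}\m{t}$, so $\m{Z}_0\m{Z}_0^H$ is Toeplitz. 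Since $K<p$ and the $f_k$ are distinct, $\m{A}$ has full column rank $K$, and $\diag\left(\sqrt{\m{s}}\right)$ is invertible because $s_k\neq0$; hence $\m{Z}_0\in\mathbb{C}^{p\times K}_*$.

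Next I would show the solution set equals the orbit $\left[\m{Z}_0\right]=\left\{\m{Z}_0\m{O}:\m{O}\in\mathcal{O}^K\right\}$. Containment is immediate since, for real $\m{O}$, $\m{O}^H=\m{O}^\top$, whence $\left(\m{Z}_0\m{O}\right)\left(\m{Z}_0\m{O}\right)^\top=\m{Z}_0\m{Z}_0^\top$ and $\left(\m{Z}_0\m{O}\right)\left(\m{Z}_0\m{O}\right)^H=\m{Z}_0\m{Z}_0^H$, with the rank preserved. For the reverse inclusion, let $\m{Z}\in\mathbb{C}^{p\times K}_*$ satisfy $\m{Z}\m{Z}^\top=\mathcal{H}\m{x}$ and $\m{Z}\m{Z}^H$ Toeplitz. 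Forming the rank-$K$ PSD matrix of~\eqref{optimzation-problem-HT-factorization} from this $\m{Z}$, its lower-left block is $\mathcal{H}\m{x}$ and its lower-right block is the Toeplitz matrix $\m{Z}\m{Z}^H$, so it is a feasible completion in~\eqref{set-HT}; the uniqueness of $\m{t}$ asserted there then forces $\m{Z}\m{Z}^H=\mathcal{T}\m{t}=\m{Z}_0\m{Z}_0^H$.

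The final step is the coupling argument that collapses the ambiguity to $\mathcal{O}^K$. From $\m{Z}\m{Z}^H=\m{Z}_0\m{Z}_0^H$ we get $\operatorname{range}\left(\m{Z}\right)=\operatorname{range}\left(\m{Z}_0\right)$, so $\m{Z}=\m{Z}_0\m{B}$ for an invertible $\m{B}\in\mathbb{C}^{K\times K}$; cancelling the full-rank factors $\m{Z}_0$ and $\m{Z}_0^H$ in $\m{Z}_0\m{B}\m{B}^H\m{Z}_0^H=\m{Z}_0\m{Z}_0^H$ gives $\m{B}\m{B}^H=\m{I}_K$. Substituting $\m{Z}=\m{Z}_0\m{B}$ into $\m{Z}\m{Z}^\top=\m{Z}_0\m{Z}_0^\top$ and cancelling $\m{Z}_0$ and $\m{Z}_0^\top$ gives $\m{B}\m{B}^\top=\m{I}_K$. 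Hence $\m{B}^H=\m{B}^\top=\m{B}^{-1}$, i.e. $\overline{\m{B}}=\m{B}$, so $\m{B}$ is real orthogonal and $\m{Z}=\m{Z}_0\m{B}\in\left[\m{Z}_0\right]$. I expect the main obstacle to be precisely this coupling: neither factorization condition alone pins $\m{Z}$ down to $\mathcal{O}^K$—the Hermitian one leaves a unitary ambiguity and the symmetric one a complex-orthogonal ambiguity—and the crux is both to invoke the uniqueness of the completion in~\eqref{set-HT} so that the Hermitian condition becomes available, and to recognize that the intersection of the unitary and complex-orthogonal groups is exactly the real orthogonal group $\mathcal{O}^K$.
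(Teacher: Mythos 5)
Your proof is correct, and it reaches the conclusion along the same skeleton as the paper --- exhibit $\m{A}\left(\m{f}\right)\left(\diag\left(\m{s}\right)\right)^{1/2}$ as a representative and identify the full solution set with its $\mathcal{O}^K$-orbit --- but it is genuinely more self-contained. The paper's proof imports the two decisive facts from elsewhere: it cites \cite[Theorem 2.1]{wu2024fast} for the reparameterization~\eqref{set-reparameterize} and simply asserts (again from~\cite{wu2024fast}) that the set of valid $\m{Z}$ equals $\left\{\m{A}\left(\m{f}\right)\left(\diag\left(\m{s}\right)\right)^{1/2}\m{O}:\m{O}\in\mathcal{O}^K\right\}$, then invokes \cite[Theorem 1]{wax1989unique} for uniqueness of $\left\{\m{f},\m{s}\right\}$ to pin down the class. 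You instead prove the orbit characterization directly: the entrywise identity $\mathcal{H}\m{x}=\m{A}\diag\left(\m{s}\right)\m{A}^\top$, the Vandermonde rank argument for membership in $\mathbb{C}^{p\times K}_*$, the reduction of the Toeplitz condition to $\m{Z}\m{Z}^H=\m{Z}_0\m{Z}_0^H$ via the uniqueness of the completion $\m{t}$ stated after~\eqref{set-HT}, and finally the coupling step showing that simultaneous invariance of $\m{Z}\m{Z}^\top$ and $\m{Z}\m{Z}^H$ forces the ambiguity matrix $\m{B}$ into the intersection of the unitary and complex-orthogonal groups, which is exactly $\mathcal{O}^K$. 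This last argument is precisely the content the paper delegates to the citation, and your version also sidesteps the need for the parameter-identifiability result of~\cite{wax1989unique}, since you characterize the solution set directly in terms of $\m{x}$. The trade-off is that your argument still leans on one external input --- the uniqueness of the Toeplitz completion $\m{t}$ for a given $\m{x}\in\mathcal{S}_0$ --- whereas the paper's citation-based route bundles that into the quoted theorem; both are legitimate, but yours makes the logical dependencies explicit.
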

\begin{proof}
It is shown in \cite[Theorem 2.1]{wu2024fast} that
\begin{equation}
\label{set-reparameterize}
\begin{aligned}
\mathcal{S}_0=\left\{\m{x}\in\mathbb{C}^N:\;\mathcal{H}\m{x}=\m{Z}\m{Z}^\top,\mathcal{T}\m{t}=\m{Z}\m{Z}^H\text{ for some $\m{t}\in\mathbb{C}^N$ and $\m{Z}\in\mathbb{C}^{p\times K}_*$}\right\}.
\end{aligned}
\end{equation}
Furthermore, the set of valid $\m{Z}$ satisfying the constraints in~\eqref{set-reparameterize} is given by $\left\{\m{A}\left(\m{f}\right)\left(\diag\left(\m{s}\right)\right)^{\frac{1}{2}}\m{O}:\;\m{O}\in\mathcal{O}^K\right\}$. 
It is shown in \cite[Theorem 1]{wax1989unique} that the pair $\left\{\m{f},\m{s}\right\}$ is uniquely determined by $\m{x}$ when $K<p$. Consequently, we obtain the corresponding unique equivalence class $\left[\m{A}\left(\m{f}\right)\left(\diag(\m{s})\right)^{1/2}\right]$, thereby completing the proof.
\end{proof}
\end{sloppypar}

We next show that the set of all such equivalence classes forms a quotient manifold, as stated in the following theorem.
\begin{thm}
\label{thm-quotient}
The set 
\begin{equation}
\begin{aligned}
\left\{\left[\m{Z}\right]=\left\{\m{Z}\m{O}\in\mathbb{C}^{p\times K}_*:\;\m{O}\in\mathcal{O}^K\right\}:\m{Z}\in\mathbb{C}^{p\times K}_*\right\}=\mathbb{C}_*^{p\times K} / \mathcal{O}^K
\end{aligned}
\end{equation}
is a quotient manifold of dimension $2pK - \frac{K(K-1)}{2}$.
\end{thm}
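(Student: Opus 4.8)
The plan is to invoke the classical Quotient Manifold Theorem: if a Lie group $G$ acts smoothly, freely, and properly on a smooth manifold $\cM$, then the orbit space $\cM/G$ admits a unique smooth manifold structure of dimension $\dim\cM-\dim G$ for which the canonical projection is a submersion (see, e.g., \cite{absil2008optimization}). In our setting $\cM=\mathbb{C}_*^{p\times K}$ and $G=\mathcal{O}^K$ acts on the right via $\m{Z}\mapsto\m{Z}\m{O}$, so the whole argument reduces to checking the three hypotheses and then reading off the dimension. No hard geometric construction is needed beyond these verifications.

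First I would establish the ambient structures. Identifying $\mathbb{C}^{p\times K}$ with $\mathbb{R}^{2pK}$, the condition $\rank(\m{Z})=K$ is open, since its complement is the common zero set of all $K\times K$ minors and hence closed; therefore $\mathbb{C}_*^{p\times K}$ is an open submanifold of $\mathbb{R}^{2pK}$ of real dimension $2pK$. I would also record that right multiplication by an orthogonal (hence invertible) matrix preserves rank, so the action genuinely maps $\mathbb{C}_*^{p\times K}$ into itself, and that $\mathcal{O}^K$ is a compact Lie group of dimension $\frac{K(K-1)}{2}$. This places us squarely in the hypotheses' setting.

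Next I would verify the three conditions in turn. Smoothness of the action is immediate because $(\m{Z},\m{O})\mapsto\m{Z}\m{O}$ is polynomial in the entries. For freeness, suppose $\m{Z}\m{O}=\m{Z}$ for some $\m{Z}\in\mathbb{C}_*^{p\times K}$; then $\m{Z}(\m{O}-\m{I}_K)=\m{0}$, and since $\m{Z}$ has full column rank the matrix $\m{Z}^H\m{Z}$ is invertible, so left-multiplying by $(\m{Z}^H\m{Z})^{-1}\m{Z}^H$ yields $\m{O}-\m{I}_K=\m{0}$, i.e. $\m{O}=\m{I}_K$. This is precisely where the restriction to full-rank matrices is essential. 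Properness, which in general is the delicate condition, is automatic here: a continuous action of a compact group on a Hausdorff space is always proper, and $\mathcal{O}^K$ is compact.

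With all hypotheses confirmed, the theorem yields that $\mathbb{C}_*^{p\times K}/\mathcal{O}^K$ is a smooth manifold of dimension $\dim\mathbb{C}_*^{p\times K}-\dim\mathcal{O}^K=2pK-\frac{K(K-1)}{2}$, completing the proof. I expect the only genuinely subtle point to be the interplay between the \emph{real} orthogonal group and the \emph{complex} matrix space in the freeness step: one must confirm that $\m{Z}(\m{O}-\m{I}_K)=\m{0}$ with $\m{Z}$ complex and $\m{O}-\m{I}_K$ real still forces the real factor to vanish. This holds because full column rank over $\mathbb{C}$ makes the left-multiplication map $\m{W}\mapsto\m{Z}\m{W}$ injective on all complex matrices, so in particular the real matrix $\m{O}-\m{I}_K$ must be zero.
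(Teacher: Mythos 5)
Your proposal is correct and follows essentially the same route as the paper: both invoke the quotient manifold theorem for a smooth, free, proper action of the compact Lie group $\mathcal{O}^K$ on the open submanifold $\mathbb{C}_*^{p\times K}$ and read off the dimension as $2pK-\frac{K(K-1)}{2}$. You simply spell out the freeness and properness checks (including the real-versus-complex subtlety) that the paper leaves as "straightforward to verify."
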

\begin{proof}
Consider the set of equivalence classes, which corresponds exactly to the orbits of the real orthogonal group action $\mathcal{O}^K$ on $\mathbb{C}_*^{p\times K}$. It is straightforward to verify that $\mathcal{O}^K$ acts smoothly on $\mathbb{C}_*^{p\times K}$, thereby satisfying the properness according to \cite[Corollary 21.6]{lee2012smooth}. Moreover, for any $\m{Z} \in \mathbb{C}_*^{p\times K}$ and $\m{O} \in \mathcal{O}^K$, the equality $\m{Z}\m{O} = \m{Z}$ holds if and only if $\m{O} = \m{I}_K$, confirming that the group action is free. Therefore, by \cite[Theorem 21.10]{lee2012smooth}, the quotient space $\mathbb{C}_*^{p\times K} / \mathcal{O}^K$ is a quotient manifold. The dimension of $\mathbb{C}_*^{p\times K}$ is $2pK$, and the dimension of $\mathcal{O}^K$ is $\frac{K(K-1)}{2}$, indicating that the dimension of the quotient manifold $\mathbb{C}_*^{p\times K} / \mathcal{O}^K$ is $2pK - \frac{K(K-1)}{2}$.
\end{proof}

By leveraging Theorem \ref{prop-x2Z} and \ref{thm-quotient}, we reformulate the Hankel-Toeplitz matrix optimization problem~\eqref{optimization-problem-HT} as an optimization over equivalence classes on a quotient manifold:
\begin{equation}
\label{optimzation-problem-HT-quotient}
\begin{aligned}
\min_{\left[\m{Z}\right]\in\mathbb{C}^{p\times K}_*/\mathcal{O}^K}\;&\rho\left(\mathcal{P}_\Omega\left(\m{D}^{-2}\mathcal{H}^*\left(\beta_1\left(\left[\m{Z}\right]\right)\right)\right),\mathcal{P}_\Omega\left(\m{y}\right)\right),\\
\st&\left(\mathcal{I}-\mathcal{G}_1\right)\left(\beta_1\left(\left[\m{Z}\right]\right)\right)=\m{0},\;\left(\mathcal{I}-\mathcal{G}_2\right)\left(\beta_2\left(\left[\m{Z}\right]\right)\right)=\m{0},
\end{aligned}
\end{equation}
where $\beta_1\left(\left[\m{Z}\right]\right)=\m{Z}\m{Z}^\top$ and $\beta_2\left(\left[\m{Z}\right]\right)\m{Z}\m{Z}^H$, for any $\m{Z}\in\left[\m{Z}\right]$. In the absence of noise, we further reformulate~\eqref{optimzation-problem-HT-quotient} as the following unconstrained minimization problem on the quotient manifold $\mathbb{C}^{p\times K}_*/\mathcal{O}^K$:
\begin{equation}
\label{unconstrained-optimzation-problem-HT-quotient}
\begin{aligned}
\min_{\left[\m{Z}\right]\in\mathbb{C}^{p\times K}_*/\mathcal{O}^K}\;&\rho\left(\mathcal{P}_\Omega\left(\m{D}^{-2}\mathcal{H}^*\left(\beta_1\left(\left[\m{Z}\right]\right)\right)\right),\mathcal{P}_\Omega\left(\m{y}\right)\right)\\
&+\frac{\mu}{4}\left\|\left(\mathcal{I}-\mathcal{G}_1\right)\left(\beta_1\left(\left[\m{Z}\right]\right)\right)\right\|_{\text{F}}^2+\frac{\mu}{4}\left\|\left(\mathcal{I}-\mathcal{G}_2\right)\left(\beta_2\left(\left[\m{Z}\right]\right)\right)\right\|_{\text{F}}^2,
\end{aligned}
\end{equation}
where the regularization coefficient $\mu$ is chosen as the sampling ratio $\left|\Omega\right|/N$, as suggested in~\cite{wu2024fast}. It can be easily shown that both~\eqref{optimzation-problem-HT-quotient} and~\eqref{unconstrained-optimzation-problem-HT-quotient} have an optimal value of zero and share the same optimal solution.

Notably, if the original problem~\eqref{feasibility-problem} has a unique optimal solution, then the equivalence class optimization problem~\eqref{optimzation-problem-HT-quotient} or~\eqref{unconstrained-optimzation-problem-HT-quotient} also possesses a unique optimal solution, which captures the frequencies and cofficients precisely. 
We then aim to efficiently solve the unconstrained optimization problem~\eqref{unconstrained-optimzation-problem-HT-quotient} on the quotient manifold $\mathbb{C}^{p\times K}_*/\mathcal{O}^K$. Prior research suggests that under a well-structured Riemannian manifold framework, advanced optimization algorithms can improve iteration efficiency and potentially relax recovery conditions such as sample complexity or condition number requirements~\cite{vandereycken2013low,bian2024preconditioned,bian2024preconditioned-SAM,steinlechner2016riemannian}. To that end, a necessary task is to study the geometric properties of $\mathbb{C}^{p\times K}_*/\mathcal{O}^K$, which will be done in the ensuing subsection.

\subsection{Riemannian geometry of \texorpdfstring{$\mathbb{C}^{p\times K}_*/\mathcal{O}^K$}{}}

We now consider the Riemannian geometry of the quotient manifold $\mathbb{C}^{p\times K}_*/\mathcal{O}^K$. The set $\mathbb{C}^{p\times K}_*$ is referred to as the total space. We equip $\mathbb{C}^{p\times K}_*$ with a Riemannian metric:
\begin{equation}
\label{defn-metric}
\begin{aligned}
\widetilde{g}_{\m{Z}}\left({\m{\xi}}_{\uparrow_{\m{Z}}},{\m{\zeta}}_{\uparrow_{\m{Z}}}\right)=\Re\left\{\tr\left(\Re\left\{\m{Z}^H\m{Z}\right\}\widetilde{\m{\xi}}^H\widetilde{\m{\zeta}}\right)\right\}=\tr\left(\Re\left\{\m{Z}^H\m{Z}\right\}\Re\left\{\widetilde{\m{\xi}}^H\widetilde{\m{\zeta}}\right\}\right),
\end{aligned}
\end{equation}
where $\m{Z} \in \mathbb{C}^{p\times K}_*$ and $\widetilde{\m{\xi}},\widetilde{\m{\zeta}} \in \text{T}_{\m{Z}}\mathbb{C}^{p\times K}=\mathbb{C}^{p\times K}$. At each point $\m{Z}$, the tangent space $\text{T}_{\m{Z}}\mathbb{C}^{p\times K}$ can be decomposed into two complementary subspaces with respect to the Riemannian metric $\widetilde{g}_{\cdot}\left(\cdot,\cdot\right)$:
\begin{equation}
\begin{aligned}
\text{T}_{\m{Z}}\mathbb{C}^{p\times K}=\mathscr{H}_{\m{Z}}\oplus \mathscr{V}_{\m{Z}},
\end{aligned}
\end{equation}
where $\mathscr{V}_{\m{Z}}$ denotes the vertical space, consisting of directions that move $\m{Z}$ within $\left[\m{Z}\right]$, and $\mathscr{H}_{\m{Z}}$ denotes the horizontal space, defined as the orthogonal complement of $\mathscr{V}_{\m{Z}}$ under $\widetilde{g}_{\cdot}\left(\cdot,\cdot\right)$. We obtain from \cite[Proposition 9.3]{boumal2023introduction} that 
\begin{equation}
\begin{aligned}
\mathscr{V}_{\m{Z}}=\text{T}_{\m{Z}}\left[\m{Z}\right]=\text{T}_{\m{Z}}\m{Z}\mathcal{O}^K=\m{Z}\text{T}_{\m{I}_K}\mathcal{O}^K.
\end{aligned}
\end{equation}
The tangent space of $\mathcal{O}^K$ at $\m{I}_K$ is $\text{T}_{\m{I}_K}\mathcal{O}^K=\left\{\m{\Omega}\in\mathbb{R}^{K\times K}:\;\m{\Omega}^\top=-\m{\Omega}\right\}$, see~\cite{absil2008optimization}. Therefore, 
\begin{equation}
\label{vertical-space}
\begin{aligned}
\mathscr{V}_{\m{Z}}=\left\{\m{Z}\m{\Omega}\in\mathbb{C}^{p\times K}:\;\m{\Omega}^\top=-\m{\Omega},\m{\Omega}\in\mathbb{R}^{K\times K}\right\}.
\end{aligned}
\end{equation}
The horizontal space is then given by
\begin{equation}
\label{horizontal-space}
\begin{aligned}
\mathscr{H}_{\m{Z}}&=\left\{\widetilde{\m{\xi}}\in\mathbb{C}^{p\times K}:\;\widetilde{g}_{\m{Z}}\left(\widetilde{\m{\xi}},\widetilde{\m{\zeta}}\right)=0,\forall\widetilde{\m{\zeta}}\in V_{\m{Z}}\right\}\\
&=\left\{\widetilde{\m{\xi}}\in\mathbb{C}^{p\times K}:\;\tr\left(\Re\left\{\m{Z}^H\m{Z}\right\}\Re\left\{\widetilde{\m{\xi}}^H\m{Z}\right\}\m{\Omega}\right)=0, \text{  $\forall\m{\Omega}\in\mathbb{R}^{K\times K}$, $\m{\Omega}^\top=-\m{\Omega}$}\right\}\\
&=\left\{\widetilde{\m{\xi}}\in\mathbb{C}^{p\times K}:\;\Re\left\{\m{Z}^H\m{Z}\right\}\Re\left\{\widetilde{\m{\xi}}^H\m{Z}\right\}=\Re\left\{\widetilde{\m{\xi}}^H\m{Z}\right\}^\top\Re\left\{\m{Z}^H\m{Z}\right\}\right\},
\end{aligned}
\end{equation}
where the last equality follows from the orthogonality and complementarity between symmetric and skew-symmetric matrices. We provide an explicit formula for projecting onto the horizontal space $\mathscr{H}_{\m{Z}}$, stated in the following proposition.
\begin{prop}
\label{prop-projection-horizontal}
Let $\widetilde{\m{\xi}}\in\mathbb{C}^{p\times K}$. The projection of $\widetilde{\m{\xi}}$ onto the horizontal space $\mathscr{H}_{\m{Z}}$ is given by
\begin{equation}
\begin{aligned}
\mathcal{P}_{\mathscr{H}_{\m{Z}}}\left(\widetilde{\m{\xi}}\right)=\widetilde{\m{\xi}}-\m{Z}\m{\Omega},
\end{aligned}
\end{equation}
where 
\begin{equation}
\label{Mat-Omega}
\begin{aligned}
\displaystyle \m{\Omega}=\frac{1}{2}\left(\Re\left\{\m{Z}^H\m{Z}\right\}^{-1}\Re\left\{\m{Z}^H\widetilde{\m{\xi}}\right\}-\Re\left\{\widetilde{\m{\xi}}^H\m{Z}\right\}\Re\left\{\m{Z}^H\m{Z}\right\}^{-1}\right).
\end{aligned}
\end{equation}
\end{prop}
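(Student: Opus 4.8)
The plan is to use the direct-sum decomposition $\text{T}_{\m{Z}}\mathbb{C}^{p\times K}=\mathscr{H}_{\m{Z}}\oplus\mathscr{V}_{\m{Z}}$ and characterize the horizontal projection as the unique way of subtracting a vertical vector from $\widetilde{\m{\xi}}$ so that the remainder lands in $\mathscr{H}_{\m{Z}}$. By \eqref{vertical-space}, every vertical vector has the form $\m{Z}\m{\Omega}$ with $\m{\Omega}\in\mathbb{R}^{K\times K}$ skew-symmetric, so I would seek such an $\m{\Omega}$ for which $\m{\eta}:=\widetilde{\m{\xi}}-\m{Z}\m{\Omega}$ satisfies the horizontal characterization in \eqref{horizontal-space}, namely that $\Re\left\{\m{Z}^H\m{Z}\right\}\Re\left\{\m{\eta}^H\m{Z}\right\}$ is symmetric.

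First I would compute the relevant quantity. Writing $\m{B}:=\Re\left\{\m{Z}^H\m{Z}\right\}$ and $\m{C}:=\Re\left\{\widetilde{\m{\xi}}^H\m{Z}\right\}$, and using that $\m{\Omega}$ is real with $\m{\Omega}^\top=-\m{\Omega}$, a short computation gives $\m{\eta}^H\m{Z}=\widetilde{\m{\xi}}^H\m{Z}+\m{\Omega}\m{Z}^H\m{Z}$ and hence $\Re\left\{\m{\eta}^H\m{Z}\right\}=\m{C}+\m{\Omega}\m{B}$. Imposing the symmetry condition $\m{B}\left(\m{C}+\m{\Omega}\m{B}\right)=\left(\m{C}+\m{\Omega}\m{B}\right)^\top\m{B}$ and using $\m{B}^\top=\m{B}$ together with $\m{\Omega}^\top=-\m{\Omega}$ reduces, after cancellation, to the Sylvester-type identity $2\m{B}\m{\Omega}\m{B}=\m{C}^\top\m{B}-\m{B}\m{C}$.

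The key structural fact I would invoke next is that $\m{B}=\Re\left\{\m{Z}^H\m{Z}\right\}$ is symmetric and positive definite: this holds because $\m{v}^\top\m{B}\m{v}=\left\|\m{Z}\m{v}\right\|_2^2>0$ for any nonzero real $\m{v}$, since $\m{Z}\in\mathbb{C}^{p\times K}_*$ has full column rank, so $\m{B}$ is invertible. Multiplying the Sylvester identity on both sides by $\m{B}^{-1}$ and simplifying via $\m{C}^\top=\Re\left\{\m{Z}^H\widetilde{\m{\xi}}\right\}$ yields exactly the stated expression \eqref{Mat-Omega} for $\m{\Omega}$. To close the argument I would verify that this $\m{\Omega}$ is genuinely skew-symmetric, so that $\m{Z}\m{\Omega}$ indeed lies in $\mathscr{V}_{\m{Z}}$: transposing the formula and using the symmetry of $\m{B}^{-1}$ gives $\m{\Omega}^\top=-\m{\Omega}$, which also confirms that the right-hand side $\m{C}^\top\m{B}-\m{B}\m{C}$ is skew-symmetric and that the skew-symmetric solution is unique.

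I do not expect a serious obstacle, as the argument is essentially a linear-algebra reduction. The one place requiring care is the bookkeeping with the real-part operator, in particular the identities $\Re\left\{\m{A}\right\}^\top=\Re\left\{\m{A}^\top\right\}$ and $\Re\left\{\overline{\m{A}}\right\}=\Re\left\{\m{A}\right\}$, which are needed both to pass between $\m{C}^\top$ and $\Re\left\{\m{Z}^H\widetilde{\m{\xi}}\right\}$ and to guarantee that the two sides of the Sylvester identity share the same (skew-)symmetry, so that a unique skew-symmetric $\m{\Omega}$ exists.
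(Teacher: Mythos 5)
Your proposal is correct and follows essentially the same route as the paper: both decompose $\widetilde{\m{\xi}}$ into a vertical part $\m{Z}\m{\Omega}$ with $\m{\Omega}$ real skew-symmetric and a horizontal remainder characterized by the symmetry of $\Re\left\{\m{Z}^H\m{Z}\right\}\Re\left\{\m{\eta}^H\m{Z}\right\}$, the only difference being that you \emph{derive} $\m{\Omega}$ by solving the resulting Sylvester-type equation while the paper simply \emph{verifies} the stated formula. Your version adds two small but worthwhile points the paper leaves implicit, namely the positive definiteness (hence invertibility) of $\Re\left\{\m{Z}^H\m{Z}\right\}$ via full column rank of $\m{Z}$, and the uniqueness of the skew-symmetric solution.
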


\begin{sloppypar}
\begin{proof}
Since $\mathscr{H}_{\m{Z}}$ and $\mathscr{V}_{\m{Z}}$ are orthogonal, it suffices to verify that $\m{Z}\m{\Omega}\in\mathscr{V}_{\m{Z}}$ and $\widetilde{\m{\xi}}-\m{Z}\m{\Omega}\in\mathscr{H}_{\m{Z}}$. First, observe that $\m{\Omega}$ in~\eqref{Mat-Omega} is skew-symmetric. From~\eqref{vertical-space}, this implies $\m{Z}\m{\Omega}\in\mathscr{V}_{\m{Z}}$. Next, we have that
\begin{equation}
\begin{aligned}
\Re\left\{\m{Z}^H\m{Z}\right\}\Re\left\{\left(\widetilde{\m{\xi}}-\m{Z}\m{\Omega}\right)^H\m{Z}\right\}=&\Re\left\{\m{Z}^H\m{Z}\right\}\Re\left\{\widetilde{\m{\xi}}^H\m{Z}+\m{\Omega}\m{Z}^H\m{Z}\right\}\\
=&\frac{1}{2}\left(\Re\left\{\m{Z}^H\m{Z}\right\}\Re\left\{\widetilde{\m{\xi}}^H\m{Z}\right\}+\Re\left\{\m{Z}^H\widetilde{\m{\xi}}\right\}\Re\left\{\m{Z}^H\m{Z}\right\}\right)
\end{aligned}
\end{equation}
is symmetric. By~\eqref{horizontal-space}, this shows $\widetilde{\m{\xi}}-\m{Z}\m{\Omega}\in\mathscr{H}_{\m{Z}}$. Therefore, $\widetilde{\m{\xi}}$ admits the orthogonal decomposition $\widetilde{\m{\xi}}=\left(\widetilde{\m{\xi}}-\m{Z}\m{\Omega}\right)+\m{Z}\m{\Omega}$, which completes the proof.
\end{proof}
\end{sloppypar}

Define the natural projection as $\Pi: \mathbb{C}_*^{p \times K} \rightarrow \mathbb{C}^{p\times K}_*/\mathcal{O}^K :\m{Z} \mapsto \left[\m{Z}\right]$. The map $\Pi\left(\cdot\right)$ is a smooth submersion, and its differential map defines a linear mapping from the tangent space of the total space to that of the quotient manifold. For every $\m{\xi}\in \text{T}_{\left[\m{Z}\right]}\mathbb{C}^{p\times K}_*/\mathcal{O}^K$, there exists a unique horizontal lift at $\m{Z}$, denoted by ${\m{\xi}}_{\uparrow_{\m{Z}}} \in \mathscr{H}_{\m{Z}}$, such that $\text{D}\Pi(\m{Z})\left[{\m{\xi}}_{\uparrow_{\m{Z}}}\right] = \m{\xi}$, see details in~\cite{absil2008optimization}. We establish in the following proposition the relationship between horizontal lifts of a tangent vector $\m{\xi}$ lifted at different representatives in $\left[\m{Z}\right]$.
\begin{prop}
\label{prop-horizontal lift}
For every $\left[\m{Z}\right]\in\mathbb{C}^{p\times K}_*/\mathcal{O}^K$, $\m{O}\in\mathcal{O}^K$, and tangent vector $\m{\xi}\in \text{T}_{\left[\m{Z}\right]}\mathbb{C}^{p\times K}_*/\mathcal{O}^K$, if ${\m{\xi}}_{\uparrow_{\m{Z}}}$ is the horizontal lift of $\m{\xi}$ at $\m{Z}$ and ${\m{\xi}}_{\uparrow_{\m{Z}\m{O}}}$ is the horizontal lift of $\m{\xi}$ at $\m{Z}\m{O}$, then
\begin{equation}
\begin{aligned}
{\m{\xi}}_{\uparrow_{\m{Z}\m{O}}}={\m{\xi}}_{\uparrow_{\m{Z}}}\m{O}.
\end{aligned}
\end{equation}
\end{prop}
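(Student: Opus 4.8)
The plan is to exploit the fact that right multiplication by $\m{O}$ is an isometry of the total space that is compatible with the natural projection $\Pi$, so that it carries the horizontal lift at $\m{Z}$ to the horizontal lift at $\m{Z}\m{O}$. Concretely, I would introduce the map $R_{\m{O}}:\mathbb{C}^{p\times K}_*\to\mathbb{C}^{p\times K}_*$, $\m{Z}\mapsto\m{Z}\m{O}$, which is a diffeomorphism whose differential, being linear, is $\text{D}R_{\m{O}}(\m{Z})[\widetilde{\m{\eta}}]=\widetilde{\m{\eta}}\m{O}$. Since $\m{Z}\m{O}\in\left[\m{Z}\right]$, we have the identity $\Pi\circ R_{\m{O}}=\Pi$; differentiating by the chain rule yields $\text{D}\Pi(\m{Z}\m{O})[\widetilde{\m{\eta}}\m{O}]=\text{D}\Pi(\m{Z})[\widetilde{\m{\eta}}]$ for every $\widetilde{\m{\eta}}\in\text{T}_{\m{Z}}\mathbb{C}^{p\times K}$.

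Next I would show that $\text{D}R_{\m{O}}$ preserves the horizontal spaces, i.e. that $\widetilde{\m{\xi}}\in\mathscr{H}_{\m{Z}}$ implies $\widetilde{\m{\xi}}\m{O}\in\mathscr{H}_{\m{Z}\m{O}}$. This follows directly from the characterization in~\eqref{horizontal-space}: writing $\m{M}=\Re\left\{\m{Z}^H\m{Z}\right\}$ and $\m{S}=\Re\left\{\widetilde{\m{\xi}}^H\m{Z}\right\}$, the defining condition is that $\m{M}\m{S}$ be symmetric. Using that $\m{O}$ is real orthogonal, so $\Re\left\{(\m{Z}\m{O})^H(\m{Z}\m{O})\right\}=\m{O}^\top\m{M}\m{O}$ and $\Re\left\{(\widetilde{\m{\xi}}\m{O})^H(\m{Z}\m{O})\right\}=\m{O}^\top\m{S}\m{O}$, the corresponding product at $\m{Z}\m{O}$ equals $\m{O}^\top\m{M}\m{S}\m{O}$, whose symmetry is equivalent to that of $\m{M}\m{S}$; hence the condition is inherited. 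Equivalently, one may observe that $R_{\m{O}}$ is a $\widetilde{g}$-isometry — a one-line computation from~\eqref{defn-metric} using the cyclicity of the trace and $\m{O}\m{O}^\top=\m{I}_K$ — which maps the vertical space onto $\mathscr{V}_{\m{Z}\m{O}}$ through $\m{Z}\m{\Omega}\mapsto(\m{Z}\m{O})(\m{O}^\top\m{\Omega}\m{O})$ with $\m{O}^\top\m{\Omega}\m{O}$ again skew-symmetric by~\eqref{vertical-space}, and therefore preserves the metric-orthogonal complement.

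Finally I would combine the two facts. Setting $\widetilde{\m{\eta}}={\m{\xi}}_{\uparrow_{\m{Z}}}$, the vector ${\m{\xi}}_{\uparrow_{\m{Z}}}\m{O}$ lies in $\mathscr{H}_{\m{Z}\m{O}}$ by the preservation step, and it satisfies $\text{D}\Pi(\m{Z}\m{O})[{\m{\xi}}_{\uparrow_{\m{Z}}}\m{O}]=\text{D}\Pi(\m{Z})[{\m{\xi}}_{\uparrow_{\m{Z}}}]=\m{\xi}$ by the commutation identity. Thus ${\m{\xi}}_{\uparrow_{\m{Z}}}\m{O}$ is a horizontal vector at $\m{Z}\m{O}$ that projects to $\m{\xi}$; by the uniqueness of the horizontal lift it must coincide with ${\m{\xi}}_{\uparrow_{\m{Z}\m{O}}}$, which is exactly the claim.

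I expect the middle step—verifying that right multiplication genuinely preserves the horizontal space—to be the only real obstacle. The computation through~\eqref{horizontal-space} is routine once the conjugation identities for $\Re\left\{\cdot\right\}$ under a real-orthogonal $\m{O}$ are recorded, but some care is warranted because the metric in~\eqref{defn-metric} depends on the base point $\m{Z}$; the most transparent justification is that $R_{\m{O}}$ is a $\widetilde{g}$-isometry sending fibers to fibers, so it automatically respects the orthogonal splitting into vertical and horizontal subspaces, after which the uniqueness of the lift closes the argument.
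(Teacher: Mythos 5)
Your proposal is correct and follows essentially the same route as the paper's proof: both use the identity $\Pi\circ R_{\m{O}}=\Pi$ to get the commutation of $\text{D}\Pi$ with right multiplication, both verify via the symmetry condition in~\eqref{horizontal-space} that $\Re\left\{(\m{Z}\m{O})^H\m{Z}\m{O}\right\}\Re\left\{({\m{\xi}}_{\uparrow_{\m{Z}}}\m{O})^H\m{Z}\m{O}\right\}=\m{O}^\top\Re\left\{\m{Z}^H\m{Z}\right\}\Re\left\{{\m{\xi}}_{\uparrow_{\m{Z}}}^H\m{Z}\right\}\m{O}$ remains symmetric, and both conclude by uniqueness (the paper phrases it as the difference lying in $\mathscr{V}_{\m{Z}\m{O}}\cap\mathscr{H}_{\m{Z}\m{O}}=\left\{\m{0}\right\}$, which is the same fact you invoke as uniqueness of the horizontal lift).
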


\begin{sloppypar}
\begin{proof}
The proof follows the idea of \cite[Lemma~2.17]{zheng2022riemannian}, which considers a related Riemannian quotient manifold. Here, we adapt the argument to the specific quotient manifold in this work.

Since $\Pi\left(\m{Z}\right)=\Pi\left(\m{Z}\m{O}\right)$, differentiating with respect to $\m{Z}$ gives $\text{D}\Pi\left(\m{Z}\right)\left[\widetilde{\m{\xi}}\right]=\text{D}\Pi\left(\m{Z}\m{O}\right)\left[\widetilde{\m{\xi}}\m{O}\right],\;\forall\widetilde{\m{\xi}}\in\mathbb{C}^{p\times K}$. Applying this to ${\m{\xi}}_{\uparrow_{\m{Z}}}$ yields $\text{D}\Pi\left(\m{Z}\right)\left[{\m{\xi}}_{\uparrow_{\m{Z}}}\right]=\text{D}\Pi\left(\m{Z}\m{O}\right)\left[{\m{\xi}}_{\uparrow_{\m{Z}}}\m{O}\right]$. Thus, $\text{D}\Pi\left(\m{Z}\m{O}\right)\left[{\m{\xi}}_{\uparrow_{\m{Z}\m{O}}}\right]=\text{D}\Pi\left(\m{Z}\right)\left[{\m{\xi}}_{\uparrow_{\m{Z}}}\right]=\text{D}\Pi\left(\m{Z}\m{O}\right)\left[{\m{\xi}}_{\uparrow_{\m{Z}}}\m{O}\right]$, which indicates ${\m{\xi}}_{\uparrow_{\m{Z}\m{O}}}-{\m{\xi}}_{\uparrow_{\m{Z}}}\m{O}\in \mathscr{V}_{\m{Z}\m{O}}$. Moreover, since ${\m{\xi}}_{\uparrow_{\m{Z}}}\in \mathscr{H}_{\m{Z}}$, $\Re\left\{\m{Z}^H\m{Z}\right\}\Re\left\{{\m{\xi}}_{\uparrow_{\m{Z}}}^H\m{Z}\right\}$ is symmetric. It follows that $\Re\left\{\left(\m{Z}\m{O}\right)^H\m{Z}\m{O}\right\}\Re\left\{\left({\m{\xi}}_{\uparrow_{\m{Z}}}\m{O}\right)^H\m{Z}\m{O}\right\}=\m{O}^\top\Re\left\{\m{Z}^H\m{Z}\right\}\Re\left\{{\m{\xi}}_{\uparrow_{\m{Z}}}^H\m{Z}\right\}\m{O}$ is symmetric, hence ${\m{\xi}}_{\uparrow_{\m{Z}}}\m{O}\in \mathscr{H}_{\m{Z}\m{O}}$. Therefore, ${\m{\xi}}_{\uparrow_{\m{Z}\m{O}}}-{\m{\xi}}_{\uparrow_{\m{Z}}}\m{O}\in \mathscr{V}_{\m{Z}\m{O}}\cap \mathscr{H}_{\m{Z}\m{O}}=\left\{\m{0}\right\}$, which completes the proof.
\end{proof}
\end{sloppypar}

It follows from Proposition \ref{prop-horizontal lift} that for every $\left[\m{Z}\right]\in\mathbb{C}^{p\times K}_*/\mathcal{O}^K$, $\m{O}\in\mathcal{O}^K$, and tangent vectors $\m{\xi},\m{\zeta}\in \text{T}_{\left[\m{Z}\right]}\mathbb{C}^{p\times K}_*/\mathcal{O}^K$, if ${\m{\xi}}_{\uparrow_{\m{Z}}}$ and ${\m{\zeta}}_{\uparrow_{\m{Z}}}$ are the horizontal lifts of $\m{\xi}$ and $\m{\zeta}$ at $\m{Z}$, then their horizontal lifts at $\m{Z}\m{O}$ are given by ${\m{\xi}}_{\uparrow_{\m{Z}\m{O}}}={\m{\xi}}_{\uparrow_{\m{Z}}}\m{O}$ and ${\m{\zeta}}_{\uparrow_{\m{Z}\m{O}}}={\m{\zeta}}_{\uparrow_{\m{Z}}}\m{O}$, respectively. Hence,
\begin{equation}
\begin{aligned}
\widetilde{g}_{\m{Z}\m{O}}\left({\m{\xi}}_{\uparrow_{\m{Z}\m{O}}},{\m{\zeta}}_{\uparrow_{\m{Z}\m{O}}}\right)&=\tr\left(\m{O}^\top\Re\left\{\m{Z}^H\m{Z}\right\}\m{O}\m{O}^\top\Re\left\{{\m{\xi}}_{\uparrow_{\m{Z}}}^H{\m{\zeta}}_{\uparrow_{\m{Z}}}\right\}\m{O}\right)\\
&=\tr\left(\Re\left\{\m{Z}^H\m{Z}\right\}\Re\left\{{\m{\xi}}_{\uparrow_{\m{Z}}}^H{\m{\zeta}}_{\uparrow_{\m{Z}}}\right\}\right)\\
&=\widetilde{g}_{\m{Z}}\left({\m{\xi}}_{\uparrow_{\m{Z}}},{\m{\zeta}}_{\uparrow_{\m{Z}}}\right),
\end{aligned}
\end{equation}
which confirms that the metric is invariant under the choice of the representative $\m{Z}\in\left[\m{Z}\right]$ and thus induces a Riemannian metric on the quotient manifold.
For any $\left[\m{Z}\right]\in\mathbb{C}^{p\times K}_*/\mathcal{O}^K$ and any $\m{\xi},\m{\zeta}\in \text{T}_{\left[\m{Z}\right]}\mathbb{C}^{p\times K}_*/\mathcal{O}^K$, the Riemannian metric is defined as
\begin{equation}
g_{[\m{Z}]}(\m{\xi},\m{\zeta}) = \widetilde{g}_{\m{Z}}\left({\m{\xi}}_{\uparrow_{\m{Z}}},{\m{\zeta}}_{\uparrow_{\m{Z}}}\right).
\end{equation}

\begin{sloppypar}
To develop an iterative optimization algorithm on $\mathbb{C}^{p\times K}_*/\mathcal{O}^K$, we establish mechanisms for computing update directions and moving points along these directions, based on the concepts of retraction and vector transport. A retraction is a mapping that moves a point on the manifold in the direction of a given tangent vector, while a vector transport maps a tangent vector from one tangent space to another. We define the retraction $\mathcal{R}_{\cdot}\left(\cdot\right)$ on $\mathbb{C}^{p\times K}_*/\mathcal{O}^K$ via a specific retraction $\widetilde{\mathcal{R}}_{\cdot}\left(\cdot\right)$ on the total space $\mathbb{C}^{p\times K}_*$:
\begin{equation}
\label{induced-retraction}
\begin{aligned}
\mathcal{R}_{\left[\m{Z}\right]}\left(\m{\xi}\right)=\Pi\left(\widetilde{\mathcal{R}}_{\m{Z}}\left({\m{\xi}}_{\uparrow_{\m{Z}}}\right)\right),
\end{aligned}
\end{equation}
where $\left[\m{Z}\right]\in\mathbb{C}^{p\times K}_*/\mathcal{O}^K$, $\m{\xi}\in \text{T}_{\left[\m{Z}\right]}\mathbb{C}^{p\times K}_*/\mathcal{O}^K$, and $\m{Z}\in\left[\m{Z}\right]$ is a representative in the total space. As shown in~\cite{absil2008optimization},
\begin{equation}
\label{retraction-totalspace}
\begin{aligned}
\widetilde{\mathcal{R}}_{\m{Z}}\left({\m{\xi}}_{\uparrow_{\m{Z}}}\right)=\m{Z}+{\m{\xi}}_{\uparrow_{\m{Z}}}
\end{aligned}
\end{equation}
is a valid retraction on $\mathbb{C}^{p\times K}_*$, provided that $\m{Z}+{\m{\xi}}_{\uparrow_{\m{Z}}}$ remains full column rank. By Proposition \ref{prop-horizontal lift}, the expression in~\eqref{induced-retraction} is independent of the choice of the representative $\m{Z}\in\left[\m{Z}\right]$. Therefore, by \cite[Proposition~4.1.3]{absil2008optimization},
\begin{equation}
\label{defn-retraction}
\begin{aligned}
\mathcal{R}_{\left[\m{Z}\right]}\left(\m{\xi}\right)=\Pi\left(\widetilde{\mathcal{R}}_{\m{Z}}\left({\m{\xi}}_{\uparrow_{\m{Z}}}\right)\right)=\left[\m{Z}+{\m{\xi}}_{\uparrow_{\m{Z}}}\right],\forall \m{Z}\in\left[\m{Z}\right],
\end{aligned}
\end{equation}
defines a valid retraction on the quotient manifold. Next, we define the vector transport by differentiating the retraction~\cite{absil2008optimization}:
\begin{equation}
\label{defn-Vectortransport}
\begin{aligned}
\mathcal{V}\left(\left[\m{Z}\right],\m{\xi},\m{\zeta}\right)&=\text{D}\mathcal{R}_{\left[\m{Z}\right]}\left(\m{\xi}\right)\left[\m{\zeta}\right]\\
&=\text{D}\Pi\left(\widetilde{\mathcal{R}}_{\m{Z}}\left({\m{\xi}}_{\uparrow_{\m{Z}}}\right)\right)\left[dR_{\m{Z}}\left({\m{\xi}}_{\uparrow_{\m{Z}}}\right)\left[{\m{\zeta}}_{\uparrow_{\m{Z}}}\right]\right]\\
&=\text{D}\Pi\left(\m{Z}+{\m{\xi}}_{\uparrow_{\m{Z}}}\right)\left[\frac{d}{dt}R_{\m{Z}}\left({\m{\xi}}_{\uparrow_{\m{Z}}}+t{\m{\zeta}}_{\uparrow_{\m{Z}}}\right)\bigg|_{t=0}\right]\\
&=\text{D}\Pi\left(\m{Z}+{\m{\xi}}_{\uparrow_{\m{Z}}}\right)\left[\mathcal{P}_{\mathscr{H}_{\m{Z}+{\m{\xi}}_{\uparrow_{\m{Z}}}}}\left({\m{\zeta}}_{\uparrow_{\m{Z}}}\right)\right],
\end{aligned}
\end{equation}
where $\left[\m{Z}\right]\in\mathbb{C}^{p\times K}_*/\mathcal{O}^K$, and $\m{\xi},\m{\zeta}\in \text{T}_{\left[\m{Z}\right]}\mathbb{C}^{p\times K}_*/\mathcal{O}^K$.

\end{sloppypar}

\section{Riemanian Optimization Algorithm}
\label{Sec:HT-RCGD}

In this section, we propose an efficient algorithm on the quotient manifold to solve problem~\eqref{unconstrained-optimzation-problem-HT-quotient}. The primary technical difficulty arises from the non-compactness of both the quotient manifold and the sublevel sets of the objective function, which may lead to convergence issues. In addition, achieving fast convergence with low computational cost per iteration requires careful refinement of the algorithmic framework, including the design of iteration steps, parameter selection, and computational procedures. Although existing tools in the literature~\cite{vandereycken2013low,sato2022riemannian,absil2008optimization,wu2024fast} provide valuable references, it remains challenging to establish rigorous convergence guarantees while maintaining computational efficiency for the specific problems considered in this work.

\subsection{Objective function}
For algorithm design, it is essential to explicitly define the objective function. A feasible choice for $\rho\left(\cdot,\cdot\right)$ is the weighted least squares criterion~\cite{cai2018spectral}:
\begin{equation}
\label{defn-distance}
\begin{aligned}
\rho\left(\mathcal{P}_\Omega\left(\m{x}\right),\mathcal{P}_\Omega\left(\m{y}\right)\right)=\frac{1}{4}\left\|\mathcal{P}_\Omega\left(\m{D}\m{x}\right)-\mathcal{P}_\Omega\left(\m{D}\m{y}\right)\right\|_{2}^2,
\end{aligned}
\end{equation}
where the weight matrix $\m{D}$ is chosen based on the Hankel structure. Specifically, the $i$-th diagonal element of $\m{D}$ corresponds to the number of entries in the $i$-th skew-diagonal of a $p\times p$ matrix. The original objective function in~\eqref{unconstrained-optimzation-problem-HT-quotient} then becomes
\begin{equation}
\begin{aligned}
h\left(\left[\m{Z}\right]\right)=&\frac{1}{4}\left\|\mathcal{P}_\Omega\left(\m{D}^{-1}\mathcal{H}^*\left(\beta_1\left(\left[\m{Z}\right]\right)\right)\right)-\mathcal{P}_\Omega\left(\m{D}\m{y}\right)\right\|_{2}^2\\
&+\frac{\mu}{4}\left\|\left(\mathcal{I}-\mathcal{G}_1\right)\left(\beta_1\left(\left[\m{Z}\right]\right)\right)\right\|_{\text{F}}^2+\frac{\mu}{4}\left\|\left(\mathcal{I}-\mathcal{G}_2\right)\left(\beta_2\left(\left[\m{Z}\right]\right)\right)\right\|_{\text{F}}^2.
\end{aligned}
\end{equation}

To facilitate the convergence analysis of the proposed algorithm, we modify this objective function by adding a regularization term. This modification is motivated by the difficulty of establishing convergence guarantees without additional assumptions, such as Lipschitz continuity of the gradient or the existence of accumulation points, due to the quartic nature of the objective function induced by the quotient geometry. Inspired by~\cite{vandereycken2013low}, we incorporate a regularization term to ensure that the iterates remain within a compact subset, thereby guaranteeing the existence of accumulation points. We define the modified objective function as
\begin{equation}
\label{modified-objective-fun}
\widehat{h}\left(\left[\m{Z}\right]\right) = h\left(\left[\m{Z}\right]\right) + \lambda\psi\left(\left[\m{Z}\right]\right),
\end{equation}
where $\lambda > 0$ is a regularization parameter, and the regularization term $\psi(\cdot)$ is defined as
\begin{equation}
\label{regularization-term}
\psi\left(\left[\m{Z}\right]\right) = \frac{1}{2} \left( \left\|\m{Z}\right\|_{\text{F}}^2 + \left\|\m{Z}^\dag\right\|_{\text{F}}^2 \right), \quad \forall \m{Z} \in \left[\m{Z}\right].
\end{equation}
It is straightforward to verify that $\psi\left(\left[\m{Z}\right]\right)$ is invariant under the choice of representative $\m{Z} \in \left[\m{Z}\right]$, indicating that the regularization term is well-defined. According to Theorem \ref{prop-x2Z}, the optimal solution of problem~\eqref{unconstrained-optimzation-problem-HT-quotient} has full column rank, ensuring that $\psi(\cdot)$ remains bounded at the optimum. Therefore, we can select a sufficiently small regularization parameter $\lambda > 0$ (e.g., $\lambda = 10^{-8}$), such that the modified objective function yields an optimal solution arbitrarily close to that of the original. Consequently, the original objective function in~\eqref{unconstrained-optimzation-problem-HT-quotient} can be safely replaced by $\widehat{h}$ in~\eqref{modified-objective-fun}. The resulting optimization problem is
\begin{equation}
\label{modified-optimzation-problem}
\min_{\left[\m{Z}\right] \in \mathbb{C}^{p \times K}_*/\mathcal{O}^K} \; \widehat{h}\left(\left[\m{Z}\right]\right).
\end{equation}

\subsection{Riemannian conjugate gradient method}

Motivated by the Riemannian conjugate gradient (RCG) method, known for its rapid convergence in unconstrained optimization over Riemannian manifolds~\cite{sakai2021sufficient,sato2022riemannian,zheng2022riemannian}, we develop a tailored variant that exploits the specific structure of problem~\eqref{unconstrained-optimzation-problem-HT-quotient} for efficient implementation. The RCG method generalizes the classical conjugate gradient algorithm from Euclidean spaces to Riemannian manifolds by introducing three key components: replacing the Euclidean gradient with the Riemannian gradient, using vector transport to compute conjugate directions, and employing a retraction to update iterates along search directions.

Using the metric defined in~\eqref{defn-metric}, the Riemannian gradient of $\widehat{h}$ is characterized in the following proposition.
\begin{prop}
\label{prop-Riemannian-gradient}
The gradient of $\widehat{h}$ at $\left[\m{Z}\right]$ is characterized by its horizontal lift at $\m{Z}$ as
\begin{equation}
\label{horizontal-lift-gradient}
\begin{aligned}
{\text{grad}\,\widehat{h}\left(\left[\m{Z}\right]\right)}_{\uparrow_{\m{Z}}} &= \Big( \mathcal{H}\m{D}^{-1}\mathcal{P}_\Omega^*\left(\mathcal{P}_\Omega\left(\m{D}^{-1}\mathcal{H}^*\left(\m{Z}\m{Z}^\top\right)\right) - \mathcal{P}_\Omega\left(\m{D}\m{y}\right)\right)\overline{\m{Z}} \\
&\quad + \mu\left(\mathcal{I} - \mathcal{G}_1\right)\left(\m{Z}\m{Z}^\top\right)\overline{\m{Z}} + \mu\left(\mathcal{I} - \mathcal{G}_2\right)\left(\m{Z}\m{Z}^H\right)\m{Z} \\
&\quad + \lambda\m{U}\left(\m{\Sigma} - \m{\Sigma}^{-2}\right)\m{V}^H \Big)\Re\left\{\m{Z}^H\m{Z}\right\}^{-1},
\end{aligned}
\end{equation}
where $\m{Z} = \m{U} \m{\Sigma} \m{V}^H$ is the SVD.
\end{prop}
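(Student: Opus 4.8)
The plan is to pass through the total space $\mathbb{C}^{p\times K}_*$ and exploit that the lifted cost $F:=\widehat{h}\circ\Pi$ is invariant under the $\mathcal{O}^K$-action. Concretely, I would use the standard fact that for a group-invariant function together with the invariant metric established above, the horizontal lift of $\text{grad}\,\widehat{h}([\m{Z}])$ coincides with the Riemannian gradient of $F$ on the total space. The computation then splits into two parts: (i) compute the Euclidean gradient $\nabla F(\m{Z})$ with respect to the ambient real inner product $\langle\m{X},\m{Y}\rangle=\Re\{\tr(\m{X}^H\m{Y})\}$; and (ii) convert it into the Riemannian gradient induced by $\widetilde{g}$, then check that the result is already horizontal so that no further projection by $\mathcal{P}_{\mathscr{H}_{\m{Z}}}$ is required.

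For step (ii), I would rewrite the metric~\eqref{defn-metric} as $\widetilde{g}_{\m{Z}}(\m{\xi},\m{\zeta})=\langle\m{\xi}\,\Re\{\m{Z}^H\m{Z}\},\m{\zeta}\rangle$, using that $\Re\{\m{Z}^H\m{Z}\}$ is real symmetric. Matching this against $\langle\nabla F,\m{\zeta}\rangle$ for all $\m{\zeta}\in\mathbb{C}^{p\times K}$ immediately yields the total-space Riemannian gradient $\nabla F(\m{Z})\,\Re\{\m{Z}^H\m{Z}\}^{-1}$, which accounts for the trailing factor in~\eqref{horizontal-lift-gradient}. Horizontality is then automatic: differentiating the invariance $F(\m{Z}\m{O})=F(\m{Z})$ along a vertical direction $\m{Z}\m{\Omega}$ with $\m{\Omega}$ skew-symmetric shows that $\langle\nabla F,\m{Z}\m{\Omega}\rangle=0$, so the Riemannian gradient is $\widetilde{g}$-orthogonal to $\mathscr{V}_{\m{Z}}$ in~\eqref{vertical-space} and therefore lies in $\mathscr{H}_{\m{Z}}$, i.e.\ it is the horizontal lift.

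The remaining work is step (i), the Euclidean gradient of the four summands of $F$. The data-fidelity term and the first penalty depend on $\m{Z}$ only through the bilinear product $\m{Z}\m{Z}^\top$, whose differential is $\m{H}\m{Z}^\top+\m{Z}\m{H}^\top$; using the self-adjointness of $\mathcal{P}_\Omega^*\mathcal{P}_\Omega$ and the adjoint pair $(\mathcal{H},\mathcal{H}^*)$, together with the symmetry of the resulting Hankel residual, the two symmetric contributions collapse into one and produce the factors $\mathcal{H}\m{D}^{-1}\mathcal{P}_\Omega^*(\cdots)\overline{\m{Z}}$ and $\mu(\mathcal{I}-\mathcal{G}_1)(\m{Z}\m{Z}^\top)\overline{\m{Z}}$. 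The third penalty depends on $\m{Z}$ through the sesquilinear product $\m{Z}\m{Z}^H$, with differential $\m{H}\m{Z}^H+\m{Z}\m{H}^H$; the Hermitian symmetry of $(\mathcal{I}-\mathcal{G}_2)(\m{Z}\m{Z}^H)$ then gives the term $\mu(\mathcal{I}-\mathcal{G}_2)(\m{Z}\m{Z}^H)\m{Z}$. Finally the regularizer is differentiated termwise: $\nabla\|\m{Z}\|_{\text{F}}^2=2\m{Z}$ is immediate, while $\|\m{Z}^\dag\|_{\text{F}}^2=\tr((\m{Z}^H\m{Z})^{-1})$ is handled via $\text{D}\,\tr(\m{M}^{-1})[\delta\m{M}]=-\tr(\m{M}^{-1}\delta\m{M}\,\m{M}^{-1})$ with $\m{M}=\m{Z}^H\m{Z}$ and the thin SVD $\m{Z}=\m{U}\m{\Sigma}\m{V}^H$, which reduces to the diagonal-in-$\m{\Sigma}$ contribution appearing in~\eqref{horizontal-lift-gradient}. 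Summing the four Euclidean gradients and right-multiplying by $\Re\{\m{Z}^H\m{Z}\}^{-1}$ produces the claimed formula.

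I expect the main obstacle to be the complex bookkeeping in step (i): because both $\widetilde{g}$ and $\langle\cdot,\cdot\rangle$ are only real-linear, one must carefully track whether each differential contributes a factor of $\m{Z}$ or $\overline{\m{Z}}$ — the holomorphic products $\m{Z}\m{Z}^\top$ give $\overline{\m{Z}}$, whereas the sesquilinear product $\m{Z}\m{Z}^H$ gives $\m{Z}$ — and the real-linear identities that re-express $\Re\{\tr(\cdot\,\m{H}^\top)\}$ and $\Re\{\tr(\cdot\,\m{H}^H)\}$ in the canonical form $\langle\cdot,\m{H}\rangle$ must be applied consistently. The pseudoinverse penalty is the most delicate piece, since differentiating $\tr((\m{Z}^H\m{Z})^{-1})$ and re-expressing the outcome through the SVD is where sign and exponent errors are easiest to make.
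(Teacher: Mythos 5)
Your proposal follows essentially the same route as the paper's proof: identify the horizontal lift of $\text{grad}\,\widehat{h}$ with the total-space Riemannian gradient of $\widehat{h}\circ\Pi$, convert the Euclidean gradient to the Riemannian one by right-multiplying with $\Re\{\m{Z}^H\m{Z}\}^{-1}$, and assemble the Euclidean gradient term by term (including the SVD-based gradient of the pseudoinverse regularizer). The only difference is one of self-containedness: you sketch the differentiation of $h\circ\Pi$ and of $\tr((\m{Z}^H\m{Z})^{-1})$ directly and explicitly verify horizontality via $\mathcal{O}^K$-invariance, whereas the paper imports the first from \cite{wu2024fast}, the second from \cite{golub1973differentiation}, and leaves horizontality implicit.
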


\begin{sloppypar}
\begin{proof}
For any $\widetilde{\m{\xi}} \in \mathbb{C}^{p \times K}$, we have
\begin{equation}
\begin{aligned}
\widetilde{g}_{\m{Z}}\left({\text{grad}\,\widehat{h}\left(\left[\m{Z}\right]\right)}_{\uparrow_{\m{Z}}}, \widetilde{\m{\xi}}\right) = \left\langle \nabla \left(\widehat{h}\circ\Pi\right)\left(\m{Z}\right), \widetilde{\m{\xi}} \right\rangle,
\end{aligned}
\end{equation}
where $\widehat{h}\circ\Pi$ denotes the composition of $\widehat{h}$ and $\Pi$. This implies 
\begin{equation}
\begin{aligned}
\Re\left\{\tr\left(\Re\left\{\m{Z}^H\m{Z}\right\}{\text{grad}\,\widehat{h}\left(\left[\m{Z}\right]\right)}_{\uparrow_{\m{Z}}}^H\widetilde{\m{\xi}}\right)\right\} = \left\langle \nabla \left(\widehat{h}\circ\Pi\right)\left(\m{Z}\right), \widetilde{\m{\xi}} \right\rangle.
\end{aligned}
\end{equation}
Hence, ${\text{grad}\,\widehat{h}\left(\left[\m{Z}\right]\right)}_{\uparrow_{\m{Z}}}=\nabla \left(\widehat{h}\circ\Pi\right)\left(\m{Z}\right)\Re\left\{\m{Z}^H\m{Z}\right\}^{-1}$. Moreover,
\begin{equation}
\begin{aligned}
\left(\widehat{h}\circ\Pi\right)\left(\m{Z}\right)=\left({h}\circ\Pi\right)\left(\m{Z}\right)+\lambda\left(\psi\circ\Pi\right)\left(\m{Z}\right).
\end{aligned}
\end{equation}
The Euclidean gradient of $\left({h}\circ\Pi\right)$ is given by~\cite{wu2024fast}: $\nabla \left({h}\circ\Pi\right)\left(\m{Z}\right) = \mathcal{H}\m{D}^{-1}\mathcal{P}_\Omega^*\left(\mathcal{P}_\Omega\left(\m{D}^{-1}\mathcal{H}^*\left(\m{Z}\m{Z}^\top\right)\right) - \mathcal{P}_\Omega\left(\m{D}\m{y}\right)\right)\overline{\m{Z}}+ \mu\left(\mathcal{I} - \mathcal{G}_1\right)\left(\m{Z}\m{Z}^\top\right)\overline{\m{Z}} + \mu\left(\mathcal{I} - \mathcal{G}_2\right)\left(\m{Z}\m{Z}^H\right)\m{Z}$. Furthermore, by \cite[Theorem 4.3]{golub1973differentiation}, the gradient of $\left(\psi\circ\Pi\right)$ is $\nabla\left(\psi\circ\Pi\right)\left(\m{Z}\right) = \m{U}\left(\m{\Sigma} - \m{\Sigma}^{-2}\right)\m{V}^H$. Combining these results completes the proof.
\end{proof}
\end{sloppypar}

To incorporate historical gradient information and refine the search direction beyond the steepest descent, we compute a conjugate direction at each iteration. Specifically, at iteration $t$, the conjugate direction is given by
\begin{equation}
\begin{aligned}
\widetilde{\m{\eta}}_t=-{\text{grad}\widehat{h}\left(\left[\m{Z}_{t-1}\right]\right)}_{\uparrow_{\m{Z}_{t-1}}}+\beta_t\mathcal{P}_{\mathscr{H}_{\m{Z}_{t-1}}}\left(\widetilde{\m{\xi}}_{t-1}\right),
\end{aligned}
\end{equation}
where $\m{Z}_{t-1}$ denotes the iterate at step $t-1$, $\beta_t$ is a scalar weighting the previous search direction, and $\widetilde{\m{\xi}}_{t-1}$ denotes the update direction at step $t-1$. We compute $\beta_t$ using the Polak-Ribi{\'e}re formula~\cite{vandereycken2013low}:
\begin{equation}
\label{update-beta}
\begin{aligned}
&\beta_t\\
=&\frac{\widetilde{g}_{\m{Z}_{t-1}}\left({\text{grad}\widehat{h}\left(\left[\m{Z}_{t-1}\right]\right)}_{\uparrow_{\m{Z}_{t-1}}},{\text{grad}\widehat{h}\left(\left[\m{Z}_{t-1}\right]\right)}_{\uparrow_{\m{Z}_{t-1}}}-\mathcal{P}_{\mathscr{H}_{\m{Z}_{t-1}}}\left({\text{grad}\widehat{h}\left(\left[\m{Z}_{t-2}\right]\right)}_{\uparrow_{\m{Z}_{t-2}}}\right)\right)}{\widetilde{g}_{\m{Z}_{t-2}}\left({\text{grad}\widehat{h}\left(\left[\m{Z}_{t-2}\right]\right)}_{\uparrow_{\m{Z}_{t-2}}},{\text{grad}\widehat{h}\left(\left[\m{Z}_{t-2}\right]\right)}_{\uparrow_{\m{Z}_{t-2}}}\right)}.
\end{aligned}
\end{equation}

To ensure global convergence, we impose a safeguard on the update direction~\cite{absil2008optimization}. Specifically, we define the update direction $\widetilde{\m{\xi}}_t$ as
\begin{equation}
\label{update direction}
\widetilde{\m{\xi}}_t=
\begin{cases}
\widetilde{\m{\eta}}_t,\;&\text{if $\widetilde{g}_{\m{Z}_t}\left(\widetilde{\m{\eta}}_t,-{\text{grad}\widehat{h}\left(\left[\m{Z}_{t-1}\right]\right)}_{\uparrow_{\m{Z}_{t-1}}}\right)>c$ };\\
-{\text{grad}\widehat{h}\left(\left[\m{Z}_{t-1}\right]\right)}_{\uparrow_{\m{Z}_{t-1}}},\;&\text{otherwise},\\
\end{cases}
\end{equation}
where $c$ is a positive scalar controlling the correlation between the update direction and the negative gradient. The criterion in~\eqref{update direction} enforces a gradient-related condition, ensuring that any accumulation point of the iterates is a critical point, provided an appropriate step size is used~\cite{absil2008optimization}.

We now specify the step size selection. The next iterate is updated via the retraction on the quotient manifold $\mathbb{C}^{p\times K}_*/\mathcal{O}^K$ defined in~\eqref{defn-retraction}:
\begin{equation}
\label{update}
\begin{aligned}
\m{Z}_{t}&=\widetilde{\mathcal{R}}_{\m{Z}_{t-1}}\left(\alpha_t\widetilde{\m{\xi}}_t\right).
\end{aligned}
\end{equation}
The step size $\alpha_t$ is selected to satisfy the Armijo condition~\cite{ring2012optimization}:
\begin{equation}
\label{Armijo-condition}
\begin{aligned}
\widehat{h}\left(\left[\m{Z}_{t-1}\right]\right)-\widehat{h}\left(\left[\widetilde{\mathcal{R}}_{\m{Z}_{t-1}}\left(\alpha_t\widetilde{\m{\xi}}_t\right)\right]\right)&\geq-C\alpha_t\widetilde{g}_{\m{Z}_{t-1}}\left({\text{grad}\widehat{h}\left(\left[\m{Z}_{t-1}\right]\right)}_{\uparrow_{\m{Z}_{t-1}}},\widetilde{\m{\xi}}_t\right),
\end{aligned}
\end{equation}
where $C\in\left(0,1\right)$ is a constant. As established in \cite[Proposition 2.1]{sato2015new}, such a step size exists because the objective function $\widehat{h}$ is smooth and bounded below. We initialize the step size search with $\alpha_t^{\left(0\right)}$, computed as
\begin{equation}
\label{opt-alpha}
\begin{aligned}
\alpha_t^{\left(0\right)}=\arg\min_{\alpha>0}\;{h}\left(\left[\widetilde{\mathcal{R}}_{\m{Z}_{t-1}}\left(\alpha\widetilde{\m{\xi}}_t\right)\right]\right).
\end{aligned}
\end{equation}
Note that
\begin{equation}
\begin{aligned}
&{h}\left(\left[\widetilde{\mathcal{R}}_{\m{Z}}\left(\alpha\widetilde{\m{\eta}}\right)\right]\right)\\
=&\frac{1}{4}\left\|\mathcal{P}_\Omega\left(\m{D}^{-1}\mathcal{H}^*\left(\left(\m{Z}+\alpha\widetilde{\m{\eta}}\right)\left(\m{Z}+\alpha\widetilde{\m{\eta}}\right)^\top\right)\right)-\mathcal{P}_\Omega\left(\m{D}\m{y}\right)\right\|_{2}^2\\
&+\frac{\mu}{4}\left\|\left(\mathcal{I}-\mathcal{G}_1\right)\left(\left(\m{Z}+\alpha\widetilde{\m{\eta}}\right)\left(\m{Z}+\alpha\widetilde{\m{\eta}}\right)^\top\right)\right\|_{\text{F}}^2+\frac{\mu}{4}\left\|\left(\mathcal{I}-\mathcal{G}_2\right)\left(\left(\m{Z}+\alpha\widetilde{\m{\eta}}\right)\left(\m{Z}+\alpha\widetilde{\m{\eta}}\right)^H\right)\right\|_{\text{F}}^2
\end{aligned}
\end{equation}
is a quartic polynomial in $\alpha$. Denote $\phi_t\left(\alpha\right)={h}\left(\left[\widetilde{\mathcal{R}}_{\m{Z}_{t-1}}\left(\alpha\widetilde{\m{\xi}}\right)\right]\right)$ as the objective function in~\eqref{opt-alpha}. The initial guess $\alpha_t^{\left(0\right)}$ is determined by finding the positive root of its derivative $\phi_t'\left(\alpha\right)$. We then perform backtracking over the interval $\left(0,\alpha_t^{\left(0\right)}\right)$ to identify the final step size $\alpha_t$ satisfying the Armijo condition in~\eqref{Armijo-condition}.

The complete algorithm is termed as Hankel-Toeplitz Riemannian Conjugate Gradient Descent (HT-RCGD) algorithm and is summarized in Algorithm 4.1. HT-RCGD can be viewed as a refinement of the existing HT-PGD algorithm~\cite{wu2024fast}, which is developed under the framework of Euclidean geometry. In contrast, HT-RCGD adopts a Riemannian conjugate gradient method that exploits the underlying manifold geometry to enhance iteration efficiency.

\begin{algorithm}[htbp]
\caption{HT-RCGD Algorithm}
\label{alg1}
\textbf{Input:} Observation index set $\Omega$, observed signal $\mathcal{P}_\Omega(\m{y})$, spectral sparsity level $K$, parameters $c$ and $C$.\\
\textbf{Output:} Estimated signal $\m{y}$.\\
\hspace*{0.02in}1: Initialize $\left\{\m{Z}_0, \widetilde{\m{\xi}}_0\right\}$;\\
\hspace*{0.02in}2: \textbf{for} $t = 1, \cdots$ \textbf{do}\\
\hspace*{0.02in}3: \hspace*{0.26in} Compute Riemannian gradient: \hfill$\triangleright$ See Algorithm 4.2\\
\hspace*{0.92in} $\displaystyle {\text{grad}\,\widehat{h}\left(\left[\m{Z}_{t-1}\right]\right)}_{\uparrow_{\m{Z}_{t-1}}}$;\\
\hspace*{0.02in}4: \hspace*{0.26in} Compute conjugate direction: \\
\hspace*{0.92in} evaluate $\beta_t$ in~\eqref{update-beta},\\
\hspace*{0.92in} $\displaystyle \widetilde{\m{\eta}}_t=-{\text{grad}\widehat{h}\left(\left[\m{Z}_{t-1}\right]\right)}_{\uparrow_{\m{Z}_{t-1}}}+\beta_t\mathcal{P}_{\mathscr{H}_{\m{Z}_{t-1}}}\left(\widetilde{\m{\xi}}_{t-1}\right)$;\\
\hspace*{0.02in}5: \hspace*{0.26in} Compute update direction:\\
\hspace*{0.92in} \textbf{if} $\widetilde{g}_{\m{Z}_t}\left(\widetilde{\m{\eta}}_t,-{\text{grad}\widehat{h}\left(\left[\m{Z}_{t-1}\right]\right)}_{\uparrow_{\m{Z}_{t-1}}}\right)>c$ \textbf{then}\\
\hspace*{1.38in} $\widetilde{\m{\xi}}_t = \widetilde{\m{\eta}}_t$;\\
\hspace*{0.92in} \textbf{else}\\
\hspace*{1.38in} $\widetilde{\m{\xi}}_t = -{\text{grad}\widehat{h}\left(\left[\m{Z}_{t-1}\right]\right)}_{\uparrow_{\m{Z}_{t-1}}}$;\\
\hspace*{0.92in} \textbf{end if}\\
\hspace*{0.02in}6: \hspace*{0.26in} Compute initial step size: \hfill$\triangleright$ See Algorithm 4.3\\
\hspace*{0.92in} $\displaystyle \alpha_t^{(0)} = \arg\min_{\alpha>0}\;{h}\left(\left[\widetilde{\mathcal{R}}_{\m{Z}_{t-1}}\left(\alpha\widetilde{\m{\xi}}_t\right)\right]\right)$;\\
\hspace*{0.02in}7: \hspace*{0.26in} Perform backtracking search to find $\alpha_t$ satisfying the Armijo condition~\eqref{Armijo-condition}; \\
\hspace*{0.02in}\hfill$\triangleright$ See Algorithm 4.4\\
\hspace*{0.02in}8: \hspace*{0.26in} Update $\m{Z}_t = \widetilde{\mathcal{R}}_{\m{Z}_{t-1}}\left(\alpha_t\widetilde{\m{\xi}}_t\right)$;\\
\hspace*{0.02in}9: \textbf{end for}\\
\hspace*{0.02in}10: Compute the final estimate $\m{y} = \left(\mathcal{H}^* \mathcal{H}\right)^{-1} \mathcal{H}^*\left(\m{Z} \m{Z}^\top\right)$.
\end{algorithm}

\subsection{Fast implementation and computational complexity}

We present an efficient implementation of the HT-RCGD algorithm by leveraging the problem structure, and analyze its computational complexity with respect to $N$ and $K$. Each iteration primarily involves two steps: (1) computing the update direction and (2) computing the step size.

\subsubsection{Computing the update direction}
\begin{sloppypar}
Let $\m{Z}=\begin{bmatrix}\m{z}_1&\cdots&\m{z}_K\end{bmatrix}$ denote the current iterate. The conjugate direction $\widetilde{\m{\eta}}$ is a linear combination of the Riemannian gradient ${\text{grad}\,\widehat{h}\left(\left[\m{Z}\right]\right)}_{\uparrow_{\m{Z}}}$ and the transported previous update direction $\widetilde{\m{\xi}}_{-1}\in \mathscr{H}_{\m{Z}_{-1}}$ to the current horizontal space $\mathscr{H}_{\m{Z}}$ via $\mathcal{P}_{\mathscr{H}_{\m{Z}}}\left(\widetilde{\m{\xi}}_{-1}\right)$ scaled by a weighting parameter $\beta$. According to Proposition \ref{prop-Riemannian-gradient}, the Riemannian gradient can be expressed as
\begin{equation}
\begin{aligned}
{\text{grad}\,\widehat{h}\left(\left[\m{Z}\right]\right)}_{\uparrow_{\m{Z}}}=&\Bigg(\mathcal{H}\m{D}^{-1}\mathcal{P}_\Omega^*\left(\mathcal{P}_\Omega\left(\m{D}^{-1}\sum_{k=1}^K\mathcal{H}^*\left(\m{z}_k\m{z}_k^\top\right)\right)-\mathcal{P}_\Omega\left(\m{D}\m{y}\right)\right)\overline{\m{Z}}\\
&+\mu\m{Z}\m{Z}^\top\overline{\m{Z}}+\mu\m{Z}\m{Z}^H\m{Z}\\
&-\mu\sum_{k=1}^K\mathcal{G}_1\left(\m{z}_k\m{z}_k^\top\right)\overline{\m{Z}}-\mu\sum_{k=1}^K\mathcal{G}_2\left(\m{z}\m{z}_k^H\right)\m{Z}\\
&+\lambda\m{U}\left(\m{\Sigma} - \m{\Sigma}^{-2}\right)\m{V}^H\Bigg)\Re\left\{\m{Z}^H\m{Z}\right\}^{-1},
\end{aligned}
\end{equation}
where $\m{Z} = \m{U} \m{\Sigma} \m{V}^H$ is the SVD. The operations $\mathcal{H}^*\left(\m{z}_k\m{z}_k^\top\right)$, $\mathcal{G}_1\left(\m{z}_k\m{z}_k^\top\right)$, $\mathcal{G}_2\left(\m{z}\m{z}_k^H\right)$ can be efficiently implemented using FFT-based fast convolution, each requiring $O\left(N\log N\right)$ operations per term~\cite{lu2015fast}. Summing across $K$ terms results in a total cost of $O\left(KN\log N\right)$ floating-point operations (flops). Matrix-vector multiplications involving the circulant matrices $\sum_{k=1}^K\mathcal{H}^*\left(\m{z}_k\m{z}_k^\top\right)$, $\sum_{k=1}^K\mathcal{G}_1\left(\m{z}_k\m{z}_k^\top\right)$, and $\sum_{k=1}^K\mathcal{G}_2\left(\m{z}\m{z}_k^H\right)$ are also accelerated using the FFT~\cite{lu2015fast}. Furthermore, by rearranging operations, we compute $\m{Z}\m{Z}^\top\overline{\m{Z}}+\mu\m{Z}\m{Z}^H\m{Z}$ using $O\left(K^2N\right)$ flops. The full Riemannian gradient evaluation is summarized in Algorithm 4.2, with a total cost of $O\left(KN\log N+K^2N\right)$ flops. 
\end{sloppypar}
\begin{algorithm}[htbp]
\label{alg-Riemannian gradient}
    \caption{Calculate the Riemannian gradient} 
    {\bf Input:}
    $\displaystyle \m{Z}\in\mathbb{C}^{p\times K}_*$.\\
    {\bf Output:} 
    Riemannian gradient $\displaystyle {\text{grad}\,\widehat{h}\left(\left[\m{Z}\right]\right)}_{\uparrow_{\m{Z}}}$.\\
    \hspace*{0.02in} 1: $\displaystyle \m{g}_1=\mathcal{H}\m{D}^{-1}\mathcal{P}_\Omega^*\left(\mathcal{P}_\Omega\left(\m{D}^{-1}\sum_{k=1}^K\mathcal{H}^*\left(\m{z}_k\m{z}_k^\top\right)\right)-\mathcal{P}_\Omega\left(\m{D}\m{y}\right)\right)\overline{\m{Z}}$; \\
\hspace*{0.02in}\hfill$\triangleright$ $\#$ $O\left(KN\log N+K^2N\right)$flops\\
    \hspace*{0.02in} 2: $\displaystyle \m{g}_2=\mu\m{Z}\m{Z}^\top\overline{\m{Z}}+\mu\m{Z}\m{Z}^H\m{Z}$; \hfill$\triangleright$ $\#$ $O\left(K^2N\right)$flops\\
    \hspace*{0.02in} 3: $\displaystyle \m{g}_3=\mu\sum_{k=1}^K\mathcal{G}_1\left(\m{z}_k\m{z}_k^\top\right)\overline{\m{Z}}+\mu\sum_{k=1}^K\mathcal{G}_2\left(\m{z}\m{z}_k^H\right)\m{Z}$; \hfill$\triangleright$ $\#$ $O\left(KN\log N\right)$flops\\
    \hspace*{0.02in} 4: Compute the SVD: $\displaystyle \m{Z}=\m{U}\m{\Sigma}\m{V}^H$; \hfill$\triangleright$ $\#$ $O\left(K^2N\right)$flops\\
    \hspace*{0.02in} 5: $\displaystyle \m{g}_4=\lambda\m{U}\left(\m{\Sigma} - \m{\Sigma}^{-2}\right)\m{V}^H$; \hfill$\triangleright$ $\#$ $O\left(K^2N\right)$flops\\
    \hspace*{0.02in} 6: $\displaystyle {\text{grad}\,\widehat{h}\left(\left[\m{Z}\right]\right)}_{\uparrow_{\m{Z}}}=\left(\m{g}_1+\m{g}_2+\m{g}_3+\m{g}_4\right)\Re\left\{\m{Z}^H\m{Z}\right\}^{-1}$. \hfill$\triangleright$ $\#$ $O\left(K^2N\right)$flops
\end{algorithm}

The vector transport is computed via small-scale matrix inversions and multiplications, with a computational cost of $O(K^2N)$ flops. The weighting parameter $\beta$ is computed using the Riemannian metric and vector transport. The metric $\widetilde{g}_{\m{Z}}\left(\widetilde{\m{\xi}},\widetilde{\m{\zeta}}\right) = \tr\left(\Re\left\{\m{Z}^H\m{Z}\right\}\Re\left\{\widetilde{\m{\xi}}^H\widetilde{\m{\zeta}}\right\}\right)$ is computed with $O\left(K^2N\right)$ flops. Hence, the overall cost of computing $\beta$ is $O\left(K^2N\right)$ flops.

\begin{sloppypar}
Combining all components, the conjugate direction is computed as $\widetilde{\m{\eta}} = -{\text{grad}\,\widehat{h}\left(\left[\m{Z}\right]\right)}_{\uparrow_{\m{Z}}} + \beta \mathcal{P}_{\mathscr{H}_{\m{Z}}}\left(\widetilde{\m{\xi}}_{-1}\right)$, which is then used to determine the new update direction $\widetilde{\m{\xi}}$ as in~\eqref{update direction}.
\end{sloppypar}

\subsubsection{Computing the step size}

\begin{sloppypar}
Given the current iterate $\m{Z}$ and the update direction $\widetilde{\m{\xi}}$, the objective function $h\left(\left[\widetilde{\mathcal{R}}_{\m{Z}}\left(\alpha\widetilde{\m{\xi}}\right)\right]\right)$ in~\eqref{opt-alpha} can be expressed as a quartic polynomial:
\begin{equation}
\begin{aligned}
\phi\left(\alpha\right)=c_0+c_1\alpha+c_2\alpha^2+c_3\alpha^3+c_4\alpha^4,
\end{aligned}
\end{equation}
where
\begin{equation}
\label{coff-stepsize}
\begin{aligned}
c_0=&\frac{1}{4}\left\|\mathcal{P}_\Omega\left(\m{D}^{-1}\mathcal{H}^*\left(\m{Z}\m{Z}^\top\right)\right)-\mathcal{P}_\Omega\left(\m{D}\m{y}\right)\right\|_{2}^2\\
&+\frac{\mu}{4}\left\|\left(\mathcal{I}-\mathcal{G}_1\right)\left(\m{Z}\m{Z}^\top\right)\right\|_{\text{F}}^2+\frac{\mu}{4}\left\|\left(\mathcal{I}-\mathcal{G}_2\right)\left(\m{Z}\m{Z}^H\right)\right\|_{\text{F}}^2,\\
c_1=&\frac{1}{2}\left\langle\mathcal{P}_\Omega\left(\m{D}^{-1}\mathcal{H}^*\left(\m{Z}\widetilde{\m{\xi}}^\top+\widetilde{\m{\xi}}\m{Z}^\top\right)\right),\mathcal{P}_\Omega\left(\m{D}^{-1}\mathcal{H}^*\left(\m{Z}\m{Z}^\top\right)\right)-\mathcal{P}_\Omega\left(\m{D}\m{y}\right)\right\rangle\\
&+\frac{\mu}{2}\left\langle\left(\mathcal{I}-\mathcal{G}_1\right)\left(\m{Z}\widetilde{\m{\xi}}^\top+\widetilde{\m{\xi}}\m{Z}^\top\right),\m{Z}\m{Z}^\top\right\rangle+\frac{\mu}{2}\left\langle\left(\mathcal{I}-\mathcal{G}_2\right)\left(\m{Z}\widetilde{\m{\xi}}^H+\widetilde{\m{\xi}}\m{Z}^H\right),\m{Z}\m{Z}^H\right\rangle,\\
c_2=&\frac{1}{2}\left\langle\mathcal{P}_\Omega\left(\m{D}^{-1}\mathcal{H}^*\left(\m{Z}\widetilde{\m{\xi}}^\top\right)\right),\mathcal{P}_\Omega\left(\m{D}^{-1}\mathcal{H}^*\left(\widetilde{\m{\xi}}\m{Z}^\top\right)\right)\right\rangle\\
&+\frac{1}{2}\left\langle\mathcal{P}_\Omega\left(\m{D}^{-1}\mathcal{H}^*\left(\widetilde{\m{\xi}}\widetilde{\m{\xi}}^\top\right)\right),\mathcal{P}_\Omega\left(\m{D}^{-1}\mathcal{H}^*\left(\m{Z}\m{Z}^\top\right)\right)-\mathcal{P}_\Omega\left(\m{D}\m{y}\right)\right\rangle\\
&+\frac{1}{4}\left(\left\|\mathcal{P}_\Omega\left(\m{D}^{-1}\mathcal{H}^*\left(\m{Z}\widetilde{\m{\xi}}^\top\right)\right)\right\|_{\text{F}}^2+\left\|\mathcal{P}_\Omega\left(\m{D}^{-1}\mathcal{H}^*\left(\widetilde{\m{\xi}}\m{Z}^\top\right)\right)\right\|_{\text{F}}^2\right)\\
&+\frac{\mu}{2}\left\langle\left(\mathcal{I}-\mathcal{G}_1\right)\left(\m{Z}\widetilde{\m{\xi}}^\top\right),\widetilde{\m{\xi}}\m{Z}^\top\right\rangle+\frac{\mu}{2}\left\langle\left(\mathcal{I}-\mathcal{G}_1\right)\left(\widetilde{\m{\xi}}\widetilde{\m{\xi}}^\top\right),\m{Z}\m{Z}^\top\right\rangle\\
&+\frac{\mu}{4}\left(\left\|\left(\mathcal{I}-\mathcal{G}_1\right)\left(\m{Z}\widetilde{\m{\xi}}^\top\right)\right\|_{\text{F}}^2+\left\|\left(\mathcal{I}-\mathcal{G}_1\right)\left(\widetilde{\m{\xi}}\m{Z}^\top\right)\right\|_{\text{F}}^2\right)\\
&+\frac{\mu}{2}\left\langle\left(\mathcal{I}-\mathcal{G}_2\right)\left(\m{Z}\widetilde{\m{\xi}}^H\right),\widetilde{\m{\xi}}\m{Z}^H\right\rangle+\frac{\mu}{2}\left\langle\left(\mathcal{I}-\mathcal{G}_2\right)\left(\widetilde{\m{\xi}}\widetilde{\m{\xi}}^H\right),\m{Z}\m{Z}^H\right\rangle\\
&+\frac{\mu}{4}\left(\left\|\left(\mathcal{I}-\mathcal{G}_2\right)\left(\m{Z}\widetilde{\m{\xi}}^H\right)\right\|_{\text{F}}^2+\left\|\left(\mathcal{I}-\mathcal{G}_2\right)\left(\widetilde{\m{\xi}}\m{Z}^H\right)\right\|_{\text{F}}^2\right),\\
c_3=&\frac{1}{2}\left\langle\mathcal{P}_\Omega\left(\m{D}^{-1}\mathcal{H}^*\left(\m{Z}\widetilde{\m{\xi}}^\top+\widetilde{\m{\xi}}\m{Z}^\top\right)\right),\mathcal{P}_\Omega\left(\m{D}^{-1}\mathcal{H}^*\left(\widetilde{\m{\xi}}\widetilde{\m{\xi}}^\top\right)\right)\right\rangle\\
&+\frac{1}{2}\left\langle\left(\mathcal{I}-\mathcal{G}_1\right)\left(\m{Z}\widetilde{\m{\xi}}^\top+\widetilde{\m{\xi}}\m{Z}^\top\right),\widetilde{\m{\xi}}\widetilde{\m{\xi}}^\top\right\rangle+\frac{1}{2}\left\langle\left(\mathcal{I}-\mathcal{G}_2\right)\left(\m{Z}\widetilde{\m{\xi}}^H+\widetilde{\m{\xi}}\m{Z}^H\right),\widetilde{\m{\xi}}\widetilde{\m{\xi}}^H\right\rangle,\\
c_4=&\frac{1}{4}\left\|\mathcal{P}_\Omega\left(\m{D}^{-1}\mathcal{H}^*\left(\widetilde{\m{\xi}}\widetilde{\m{\xi}}^\top\right)\right)\right\|_{2}^2+\frac{\mu}{4}\left\|\left(\mathcal{I}-\mathcal{G}_1\right)\left(\widetilde{\m{\xi}}\widetilde{\m{\xi}}^\top\right)\right\|_{\text{F}}^2+\frac{\mu}{4}\left\|\left(\mathcal{I}-\mathcal{G}_2\right)\left(\widetilde{\m{\xi}}\widetilde{\m{\xi}}^H\right)\right\|_{\text{F}}^2.
\end{aligned}
\end{equation}
The initial step size $\alpha^{\left(0\right)}$ is chosen as the smallest positive real root of its derivative:
\begin{equation}
\begin{aligned}
\phi'\left(\alpha\right)=c_1+2c_2\alpha+3c_3\alpha^2+4c_4\alpha^3.
\end{aligned}
\end{equation}
Using a similar approach to that employed for computing the Riemannian gradient, these coefficients can be efficiently evaluated using FFTs. The initial estimate of the step size is computed in Algorithm 4.3, with a total computational complexity of $O\left(K^2N+KN\log N\right)$ flops.
\begin{algorithm}[htbp]
\label{alg-initial guess}
    \caption{Calculate the initial step size} 
    {\bf Input:}
    $\displaystyle \m{Z}\in\mathbb{C}^{p\times K}_*$, and horizontal vector $\displaystyle \widetilde{\m{\xi}}\in \mathscr{H}_{\m{Z}}$.\\
    {\bf Output:} 
    Initial step size $\displaystyle \alpha^{(0)} = \arg\min_{\alpha > 0} h\left(\left[\widetilde{\mathcal{R}}_{\m{Z}}\left(\alpha \widetilde{\m{\xi}}\right)\right]\right)$.\\
    \hspace*{0.02in} 1: Compute coefficients $\displaystyle \left\{c_1,c_2,c_3,c_4\right\}$ from~\eqref{coff-stepsize}; \hfill$\triangleright$ $\#$ $O\left(KN\log N+K^2N\right)$flops\\
    \hspace*{0.02in} 2: Solve for $\alpha^{\left(0\right)}$ as the smallest real positive root of $\phi'\left(\alpha\right)$.
\end{algorithm}
\end{sloppypar}

Next, we refine the step size by enforcing the Armijo condition. The modified objective function is defined as:
\begin{equation}
\begin{aligned}
\widehat{h}\left(\left[\widetilde{\mathcal{R}}_{\m{Z}}\left(\alpha \widetilde{\m{\xi}}\right)\right]\right)=&{h}\left(\left[\widetilde{\mathcal{R}}_{\m{Z}}\left(\alpha \widetilde{\m{\xi}}\right)\right]\right)+\frac{\lambda}{2}\left(\left\|\widetilde{\mathcal{R}}_{\m{Z}}\left(\alpha\widetilde{\m{\xi}}\right)\right\|_{\text{F}}^2 + \left\|\left(\widetilde{\mathcal{R}}_{\m{Z}}\left(\alpha\widetilde{\m{\xi}}\right)\right)^\dag\right\|_{\text{F}}^2\right)\\
=&\phi\left(\alpha\right)+\frac{\lambda}{2}\left(\left\|\m{Z}+\alpha\widetilde{\m{\xi}}\right\|_{\text{F}}^2+\left\|\left(\m{Z}+\alpha\widetilde{\m{\xi}}\right)^\dag\right\|_{\text{F}}^2\right)\\
=&\phi\left(\alpha\right)+\frac{\lambda}{2}\left(\left\|\m{Z}\right\|_{\text{F}}^2+\alpha\left\langle\m{Z},\widetilde{\m{\xi}}\right\rangle+\alpha^2\left\|\widetilde{\m{\xi}}\right\|_{\text{F}}^2\right)+\frac{\lambda}{2}\left\|\left(\m{Z}+\alpha\widetilde{\m{\xi}}\right)^\dag\right\|_{\text{F}}^2.
\end{aligned}
\end{equation}
Let $\m{Z}=\m{U}_1\m{\Sigma}_1\m{V}_1^H$ and $\widetilde{\m{\xi}}=\m{U}_2\m{\Sigma}_2\m{V}_2^H$ be the SVDs. Define $\begin{bmatrix}\m{U}_1&\m{U}_2\end{bmatrix}=\m{U}_3\m{\Sigma}_3\m{V}_3^H$ and $\begin{bmatrix}\m{V}_1&\m{V}_2\end{bmatrix}=\m{U}_4\m{\Sigma}_4\m{V}_4^H$ as SVDs. Then,
\begin{equation}
\begin{aligned}
\left\|\left(\m{Z}+\alpha\widetilde{\m{\xi}}\right)^\dag\right\|_{\text{F}}^2=&\left\|\left(\m{U}_1\m{\Sigma}_1\m{V}_1^H+\alpha\m{U}_2\m{\Sigma}_2\m{V}_2^H\right)^\dag\right\|_{\text{F}}^2\\
=&\left\|\left(\m{U}_3\m{\Sigma}_3\m{V}_3^H\begin{bmatrix}\m{\Sigma}_1&\\&\alpha\m{\Sigma}_2\end{bmatrix}\left(\m{U}_4\m{\Sigma}_4\m{V}_4^H\right)^H\right)^\dag\right\|_{\text{F}}^2\\
=&\left\|\left(\m{\Sigma}_3\m{V}_3^H\begin{bmatrix}\m{\Sigma}_1&\\&\alpha\m{\Sigma}_2\end{bmatrix}\m{V}_4\m{\Sigma}_4\right)^\dag\right\|_{\text{F}}^2.
\end{aligned}
\end{equation}
Therefore, once $\left\langle\m{Z},\widetilde{\m{\xi}}\right\rangle$, $\left\|\widetilde{\m{\xi}}\right\|_{\text{F}}^2$, and the necessary SVDs are computed (requiring $O\left(K^2N\right)$ flops), the evaluation of $\widehat{h}\left(\left[\widetilde{\mathcal{R}}_{\m{Z}}\left(\alpha \widetilde{\m{\xi}}\right)\right]\right)$ incurs only an additional $O\left(K^3\right)$ flops. Algorithm 4.4 conducts a backtracking line search to identify the smallest integer $q\geq0$ such that $\alpha^{\left(q\right)}=\frac{1}{2^q}\alpha^{\left(0\right)}$ satisfies Armijo condition.
\begin{algorithm}[htbp]
\label{alg-step size}
    \caption{Calculate the step size} 
    {\bf Input:}
    $\displaystyle \m{Z}\in\mathbb{C}^{p\times K}_*$, horizontal vector $\displaystyle \widetilde{\m{\xi}}\in \mathscr{H}_{\m{Z}}$, and parameter $C$.\\
    {\bf Output:} 
    Step size $\displaystyle \alpha^{(q)}$.\\
    \hspace*{0.02in} 1: Compute $\displaystyle \left\langle\m{Z},\widetilde{\m{\xi}}\right\rangle$ and $\displaystyle \left\|\widetilde{\m{\xi}}\right\|_{\text{F}}^2$; \hfill$\triangleright$ $\#$ $O\left(K^2N\right)$flops\\
    \hspace*{0.02in} 2: Compute SVDs: $\displaystyle \m{Z}=\m{U}_1\m{\Sigma}_1\m{V}_1^H$ and $\displaystyle \widetilde{\m{\xi}}=\m{U}_2\m{\Sigma}_2\m{V}_2^H$; \hfill$\triangleright$ $\#$ $O\left(K^2N\right)$flops\\
    \hspace*{0.02in} 3: Compute SVDs: $\displaystyle \begin{bmatrix}\m{U}_1&\m{U}_2\end{bmatrix}=\m{U}_3\m{\Sigma}_3\m{V}_3^H$ and $\displaystyle \begin{bmatrix}\m{V}_1&\m{V}_2\end{bmatrix}=\m{U}_4\m{\Sigma}_4\m{V}_4^H$;\\
\hspace*{0.02in} \hfill$\triangleright$ $\#$ $O\left(K^2N\right)$flops\\
    \hspace*{0.02in} 4: Compute $\displaystyle \widehat{h}\left(\left[\m{Z}\right]\right)$; \\
    \hspace*{0.02in} 5: Compute $\displaystyle \widetilde{g}_{\m{Z}}\left({\text{grad}\,\widehat{h}\left(\left[\m{Z}\right]\right)}_{\uparrow_{\m{Z}}}, \widetilde{\m{\xi}}\right)$; \hfill$\triangleright$ $\#$ $O\left(K^2N\right)$flops\\
    \hspace*{0.02in} 6: \textbf{for} $q = 0,\cdots$ \textbf{do}\\
\hspace*{0.46in} Set $\displaystyle \alpha^{\left(q\right)}=\frac{1}{2^q}\alpha^{\left(0\right)}$; \\
\hspace*{0.46in} Evaluate $\displaystyle \widehat{h}\left(\left[\widetilde{\mathcal{R}}_{\m{Z}}\left(\alpha^{(q)}\widetilde{\m{\xi}}\right)\right]\right)$ from $\displaystyle \left\|\left(\m{\Sigma}_3\m{V}_3^H\begin{bmatrix}\m{\Sigma}_1&\\&\alpha\m{\Sigma}_2\end{bmatrix}\m{V}_4\m{\Sigma}_4\right)^\dag\right\|_{\text{F}}^2$;\\
\hspace*{0.02in}\hfill$\triangleright$ $\#$ $O\left(K^3\right)$flops\\
\hspace*{0.46in} \textbf{if} $\displaystyle \widehat{h}\left(\left[\m{Z}\right]\right) - \widehat{h}\left(\left[\widetilde{\mathcal{R}}_{\m{Z}}\left(\alpha^{(q)}\widetilde{\m{\xi}}\right)\right]\right) \geq -C \alpha^{\left(q\right)} \widetilde{g}_{\m{Z}}\left({\text{grad}\,\widehat{h}\left(\left[\m{Z}\right]\right)}_{\uparrow_{\m{Z}}}, \widetilde{\m{\xi}}\right)$ \textbf{then}\\
\hspace*{0.92in} \textbf{break}.\\
\hspace*{0.46in} \textbf{end if}\\
    \hspace*{0.02in} 7: \textbf{end for}
\end{algorithm}

Combining the two stages, initial estimation and backtracking refinement, the total computational complexity for step size selection is $O\left(K^2N+KN\log N\right)$ flops.

Neglecting lower-order terms, the overall computational complexity of HT-RCGD per iteration is given by
\begin{equation}
\begin{aligned}
O\left(K^2N+KN\log N\right)=O\left(KN\max\left\{K,\log N\right\}\right).
\end{aligned}
\end{equation}
Table \ref{table} summarizes the per-iteration computational complexity of several representative methods for spectral compressed sensing, including enhanced matrix completion (EMaC)~\cite{chen2013spectral}, atomic norm minimization (ANM)~\cite{tang2013compressed}, H-PGD~\cite{cai2018spectral}, SH-PGD~\cite{li2024projected}, Hankel-based Preconditioned Fast Iterative Hard Thresholding (H-PFIHT)~\cite{bian2024preconditioned-SAM}, H-LPPG~\cite{yao2025low}, HT-PGD~\cite{wu2024fast}, and the proposed HT-RCGD. Both EMaC and ANM are considered to be solved using the interior-point method~\cite{ben2001lectures}.

\begin{table}
\renewcommand{\arraystretch}{1.5}
\setlength{\tabcolsep}{24pt}
\begin{center}
	\begin{tabular}{cc}
		\hline
		 Algorithm &  Time Complexity per Iteration\\ 
		\hline
		EMaC&$O\left(N^{5.5}\right)$\\
		\hline
		ANM&$O\left(N^{4.5}\right)$\\
		\hline
		H-PGD&$O\left(KN\max\left\{K,\log N\right\}\right)$\\
		\hline
		SH-PGD&$O\left(KN\max\left\{K,\log N\right\}\right)$\\
		\hline
		H-PFIHT&$O\left(KN\max\left\{K,\log N\right\}\right)$\\
		\hline
		H-LPPG&$O\left(K^4N+K^3N\log N\right)$\\
		\hline
		HT-PGD&$O\left(KN\max\left\{K,\log N\right\}\right)$\\
		\hline
		HT-RCGD&$O\left(KN\max\left\{K,\log N\right\}\right)$\\
		\hline
	\end{tabular}
\end{center}
	\caption{Comparisons of time complexity per iteration.}
\label{table}
\end{table}

\subsection{Convergence}

We establish the convergence of HT-RCGD in the following theorem, which guarantees the existence of at least one accumulation point, and further shows that every accumulation point is a critical point.
\begin{thm}
\label{thm-convergence}
Let the sequence $\left\{\m{Z}_t\right\}$ be generated by HT-RCGD. Then, the following statements hold true: (i) at least one accumulation point exists; and (ii) 
\begin{equation}
\label{convergence}
\begin{aligned}
\lim_{t\to\infty}\widetilde{g}_{\m{Z}_t}\left({\text{grad}\,\widehat{h}\left(\left[\m{Z}_t\right]\right)}_{\uparrow_{\m{Z}_t}},{\text{grad}\,\widehat{h}\left(\left[\m{Z}_t\right]\right)}_{\uparrow_{\m{Z}_t}}\right)=0.
\end{aligned}
\end{equation}
\end{thm}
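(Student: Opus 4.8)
The plan is to verify the hypotheses of the classical gradient-related convergence theorem for line-search methods on Riemannian manifolds, \cite[Theorem~4.3.1]{absil2008optimization}, applied to the quotient manifold $\mathbb{C}^{p\times K}_*/\mathcal{O}^K$ with the metric $g$, the retraction in~\eqref{defn-retraction}, and the Armijo step size in~\eqref{Armijo-condition}. Concretely, I must establish three facts: (a) the iterates stay in a compact subset of the manifold; (b) the search directions $\widetilde{\m{\xi}}_t$ are gradient-related; and (c) $\widehat{h}$ is smooth and bounded below. Granting these, part (i) is immediate from compactness, and part (ii) follows by combining the theorem with a continuity argument.

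First I would establish compactness, which also yields part (i). Since each $\widetilde{\m{\xi}}_t$ is a descent direction by construction, the right-hand side of~\eqref{Armijo-condition} is nonnegative, so $\{\widehat{h}([\m{Z}_t])\}$ is nonincreasing and all iterates lie in the sublevel set $\mathcal{L}=\{[\m{Z}]:\widehat{h}([\m{Z}])\le\widehat{h}([\m{Z}_0])\}$. As $h\ge0$, every $[\m{Z}]\in\mathcal{L}$ obeys $\lambda\psi([\m{Z}])\le\widehat{h}([\m{Z}_0])$. Denoting the singular values of a representative $\m{Z}$ by $\sigma_1,\dots,\sigma_K$, definition~\eqref{regularization-term} yields both $\tfrac{\lambda}{2}\sum_i\sigma_i^2\le\widehat{h}([\m{Z}_0])$ and $\tfrac{\lambda}{2}\sum_i\sigma_i^{-2}\le\widehat{h}([\m{Z}_0])$, forcing every $\sigma_i$ into a fixed interval $[a,b]\subset(0,\infty)$. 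Thus the representatives form a closed, bounded set bounded away from the rank-deficient boundary of $\mathbb{C}^{p\times K}_*$; since the $\mathcal{O}^K$-action is proper with compact orbits (cf.\ the proof of Theorem~\ref{thm-quotient}), $\mathcal{L}$ is compact in the quotient, proving (i).

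Next I would check gradient-relatedness. In the steepest-descent branch $\widetilde{\m{\xi}}_t=-{\text{grad}\,\widehat{h}([\m{Z}_{t-1}])}_{\uparrow_{\m{Z}_{t-1}}}$ one has $\widetilde{g}(\widetilde{\m{\xi}}_t,-\text{grad}\,\widehat{h})=\|\text{grad}\,\widehat{h}\|^2$, which is admissible; in the conjugate branch the safeguard in~\eqref{update direction} enforces $\widetilde{g}(\widetilde{\m{\eta}}_t,-\text{grad}\,\widehat{h})>c>0$, so the correlation with the negative gradient is bounded away from zero along any subsequence approaching a noncritical point. It remains to bound $\|\widetilde{\m{\eta}}_t\|$: on the compact set $\mathcal{L}$ the smoothness of $\widehat{h}$ makes the Riemannian gradient and the transported term $\mathcal{P}_{\mathscr{H}_{\m{Z}_{t-1}}}(\widetilde{\m{\xi}}_{t-1})$ uniformly bounded, while along a subsequence converging to a noncritical point the Polak--Ribi\'ere scalar $\beta_t$ in~\eqref{update-beta} stays bounded because its denominator is bounded below by continuity of the gradient near the noncritical limit. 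With $\widehat{h}\ge0$ giving boundedness below and $\widehat{h}$ smooth on $\mathcal{L}$, all hypotheses of \cite[Theorem~4.3.1]{absil2008optimization} hold, whence every accumulation point of $\{[\m{Z}_t]\}$ is critical. To reach~\eqref{convergence} I would then argue by contradiction: if some subsequence had gradient norm bounded below by $\varepsilon>0$, compactness of $\mathcal{L}$ would extract a further subsequence converging to an accumulation point at which, by continuity, the gradient norm is at least $\varepsilon$, contradicting criticality; hence the full sequence of gradient norms tends to zero, which is exactly~\eqref{convergence}.

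The main obstacle will be the boundedness half of the gradient-relatedness argument, namely controlling $\beta_t$ and hence $\|\widetilde{\m{\eta}}_t\|$. The difficulty is that $\beta_t$ in~\eqref{update-beta} is a ratio of gradient quantities evaluated at the \emph{different} iterates $[\m{Z}_{t-1}]$ and $[\m{Z}_{t-2}]$, so a uniform bound is not automatic; it must be obtained by exploiting, as the definition of gradient-relatedness permits, only subsequences converging to a noncritical point, where the denominator stays bounded away from zero, and making this localization rigorous on the non-compact quotient manifold is the delicate step. A secondary technicality is to confirm that the retraction in~\eqref{defn-retraction} stays well-defined throughout the backtracking search; this follows because $\psi$ blows up as a representative nears rank deficiency, so the Armijo test automatically rejects any step that would leave $\mathbb{C}^{p\times K}_*$.
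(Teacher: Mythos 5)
Your proposal follows essentially the same route as the paper's proof: the same sublevel-set argument (monotone decrease under Armijo, then $\lambda\psi\left(\left[\m{Z}\right]\right)\le\widehat{h}\left(\left[\m{Z}_0\right]\right)$ forcing the singular values of representatives into a fixed interval, hence compactness and part (i)), followed by the same appeal to \cite[Theorem~4.3.1]{absil2008optimization} via gradient-relatedness of the safeguarded directions and the same contradiction argument to pass from ``every accumulation point is critical'' to the limit~\eqref{convergence}. If anything, you are more careful than the paper, which asserts gradient-relatedness directly from~\eqref{update direction} without addressing the boundedness of the search directions (equivalently, of $\beta_t$) that the definition of a gradient-related sequence also requires---the very point you correctly flag as the delicate step.
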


\begin{sloppypar}
\begin{proof}
The proof proceeds in two parts. First, we establish the existence of at least one accumulation point. Inspired by \cite[Proposition 4.2]{vandereycken2013low}, we aim to show that the iterates remain in a compact subset of the quotient manifold $\mathbb{C}^{p\times K}_*/\mathcal{O}^K$. Consider the sublevel set $\left\{\left[\m{Z}\right] : \widehat{h}\left(\left[\m{Z}\right]\right) \leq \widehat{h}\left(\left[\m{Z}_0\right]\right)\right\}$. Since $h(\cdot)$ is non-negative, it follows that $\left\{\left[\m{Z}\right] : \widehat{h}\left(\left[\m{Z}\right]\right) \leq \widehat{h}\left(\left[\m{Z}_0\right]\right)\right\}
\subseteq
\left\{\left[\m{Z}\right] : \lambda\psi\left(\left[\m{Z}\right]\right) \leq \widehat{h}\left(\left[\m{Z}_0\right]\right)\right\}$. For any representative $\m{Z} \in \left[\m{Z}\right]$ in the latter set, $\frac{\lambda}{2}\left(\left\|\m{Z}\right\|_{\text{F}}^2 + \left\|\m{Z}^\dag\right\|_{\text{F}}^2\right) \leq \widehat{h}\left(\left[\m{Z}_0\right]\right)$, which implies $\left\|\m{Z}\right\|_{\text{F}}^2 \leq \frac{2\widehat{h}\left(\left[\m{Z}_0\right]\right)}{\lambda}$ and $\left\|\m{Z}^\dag\right\|_{\text{F}}^2 \leq \frac{2\widehat{h}\left(\left[\m{Z}_0\right]\right)}{\lambda}$. Thus, the singular values satisfy $\sigma_{\max}(\m{Z}) \leq \sqrt{\frac{2\widehat{h}([\m{Z}_0])}{\lambda}}$ and $\sigma_{\min}(\m{Z}) \geq \sqrt{\frac{\lambda}{2\widehat{h}([\m{Z}_0])}}$, where $\sigma_{\max}(\m{Z})$ and $\sigma_{\min}(\m{Z})$ denote the largest and smallest singular values of $\m{Z}$, respectively. Consequently, the set $\left\{\left[\m{Z}\right] : \sigma_{\max}(\m{Z}) \leq \sqrt{\frac{2\widehat{h}([\m{Z}_0])}{\lambda}}, \;\sigma_{\min}(\m{Z}) \geq \sqrt{\frac{\lambda}{2\widehat{h}([\m{Z}_0])}}\right\}$ is closed and bounded, and hence compact~\cite{vandereycken2013low}. Since the objective value decreases monotonically under the Armijo step size rule, all iterates remain in this compact set, ensuring the existence of an accumulation point.

Second, we prove the asymptotic vanishing of the Riemannian gradient by contradiction. Suppose~\eqref{convergence} does not hold. Then there exist $\varepsilon>0$ and a subsequence $\left\{\m{Z}_t\right\}_{t\in\mathcal{K}}$ such that $\widetilde{g}_{\m{Z}_t}\left({\text{grad}\,\widehat{h}\left(\left[\m{Z}_t\right]\right)}_{\uparrow_{\m{Z}_t}},{\text{grad}\,\widehat{h}\left(\left[\m{Z}_t\right]\right)}_{\uparrow_{\m{Z}_t}}\right)\geq\varepsilon,\;\forall t\in\mathcal{K}$. Let ${\m{Z}}^*$ be an accumulation point of this subsequence. Then $\widetilde{g}_{\m{Z}^*}\left({\text{grad}\,\widehat{h}\left(\left[\m{Z}^*\right]\right)}_{\uparrow_{\m{Z}^*}},{\text{grad}\,\widehat{h}\left(\left[\m{Z}^*\right]\right)}_{\uparrow_{\m{Z}^*}}\right)>0$. However, from~\eqref{update direction}, the update direction $\m{\xi}_t$ satisfies the gradient-related condition. By \cite[Theorem 4.3.1]{absil2008optimization}, every accumulation point is a critical point, contradicting our assumption that~\eqref{convergence} does not hold. Therefore, the Riemannian gradient must vanish asymptotically.

Combining both parts completes the proof.
\end{proof}
\end{sloppypar}


\section{Numerical Simulations}
\label{Sec:simulation}

In this section, we present numerical experiments to evaluate the performance of HT-RCGD for spectral compressed sensing. The initialization procedure for HT-RCGD is as follows. The variable $\widetilde{\m{\xi}}$ is initialized as $\m{0}$. Given the linear observation ${\mathcal{P}_\Omega}\left(\m{y}\right)\in\mathbb{C}^M$, the initialization of $\m{Z}$ follows the approach suggested in~\cite{wu2024fast}. Specifically, we compute the rank-$K$ approximation of $\frac{M}{N}\mathcal{H}{\mathcal{P}_\Omega}^*{\mathcal{P}_\Omega}\left(\m{y}\right)$ and perform its Takagi factorization $\m{U}\m{\Sigma}\m{U}^\top$, where $\m{U}\in\mathbb{C}^{p\times K}$ and $\m{\Sigma}\in\mathbb{R}^{p\times K}$~\cite{chebotarev2014singular}. We then initialize $\m{Z}=\m{U}\m{\Sigma}^{\frac{1}{2}}$. HT-RCGD is terminated either when the norm of the Riemannian gradient $\displaystyle \widetilde{g}_{\m{Z}}\left({\text{grad}\,\widehat{h}\left(\left[\m{Z}\right]\right)}_{\uparrow_{\m{Z}}},{\text{grad}\,\widehat{h}\left(\left[\m{Z}\right]\right)}_{\uparrow_{\m{Z}}}\right)$ falls below a threshold $\epsilon=10^{-6}$, or when the number of iterations reaches $3\times 10^3$. We set the gradient-related parameter $c=10^{-8}$ and the Armijo parameter $C=10^{-5}$.

We compare HT-RCGD with several existing methods, including EMaC~\cite{chen2013spectral}, ANM~\cite{tang2013compressed}, SH-PGD~\cite{li2024projected}, H-PFIHT~\cite{bian2024preconditioned-SAM}, and HT-PGD~\cite{wu2024fast}. For EMaC and ANM, we utilize the interior-point method-based SDPT3 solver~\cite{toh2012implementation} via the CVX toolbox~\cite{grant2014cvx}. Additionally, we introduce the upper bounds on the number of uniquely identifiable spectral components as performance benchmarks. It is shown in~\cite{wax1989unique} that the conditions for deterministic unique identifiability and almost sure identifiability are:
\begin{equation}
\label{upper-bound0}
\begin{aligned}
K&<\frac{M+1}{2},
\end{aligned}
\end{equation}
and
\begin{equation}
\label{upper-bound}
\begin{aligned}
K&<\frac{2M}{3},
\end{aligned}
\end{equation}
respectively. 

Each coefficient $s_k$ is generated independently with amplitude $1+\left|\omega_k\right|$ and random phases, where $\omega_k$ follows the standard Gaussian distribution. To assess the accuracy of spectral-sparse signal reconstruction, we compute the normalized mean square error (NMSE) defined as: $\text{NMSE}=\frac{\left\|\widehat{\m{y}}-\m{y}\right\|_{2}^2}{\left\|\m{y}\right\|_{2}^2}$, where $\widehat{\m{y}}$ and $\m{y}$ represent the estimated and true signals, respectively. In the low-rank Hankel-Toeplitz model, we set $p=\max\left\{\left\lceil\frac{N+1}{2}\right\rceil,K+1\right\}$, where $\left\lceil\cdot\right\rceil$ denotes the ceiling operator. Consequently, the rank-$K$ Hankel-Toeplitz matrix is of size $2p \times 2p$, and the embedded Hankel and Toeplitz submatrices are rank-deficient. In the low-rank Hankel model, we set the Hankel matrix size to $p\times p$ to ensure it is rank-deficient. Similarly, in the low-rank Toeplitz model, the matrix size is set to $\max\left\{N,K+1\right\}\times\max\left\{N,K+1\right\}$ to achieve rank deficiency.

In $\emph{Experiment \ 1}$, we evaluate the numerical performance of HT-RCGD in terms of convergence rate by computing the NMSE at each iteration. The experiment is conducted with $N=70$ and $M=40$. Consider $K=6$ spectral components with frequencies $\m{f}=\left[0.0573,0.1382,0.7245,0.7561,0.8846,0.9954\right]^\top$. The results from a single trial are illustrated in Fig.~\ref{fig.Convergence}. The proposed HT-RCGD has the fastest convergence rate, followed by H-FTIHT, SH-PGD and HT-PGD, demonstrating that the use of the established quotient geometry effectively accelerates convergence.
\begin{figure}
\begin{center}
\includegraphics[width=2.6in]{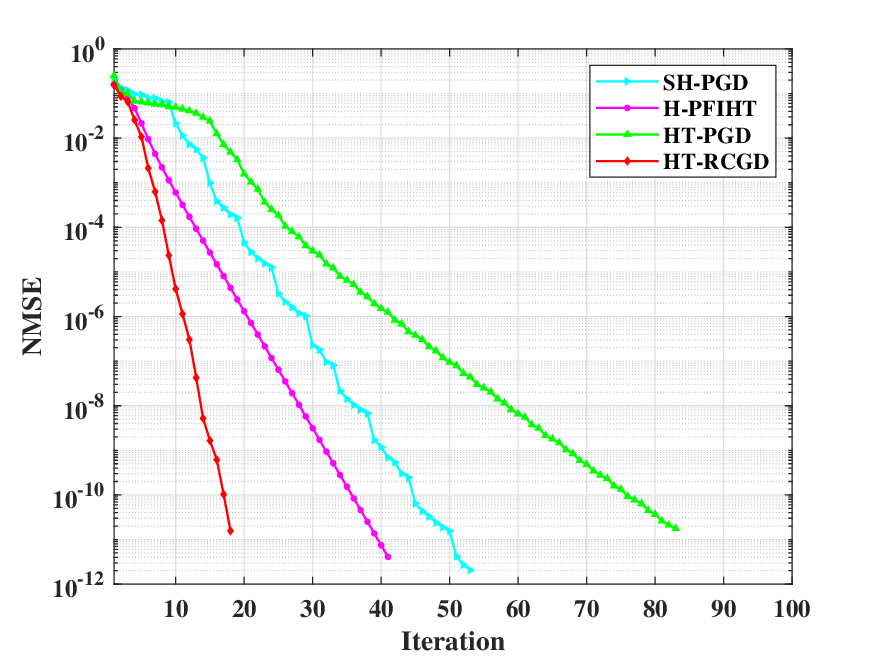}
\caption{NMSE vs iteration.}
\end{center}
\label{fig.Convergence}
\end{figure}

In $\emph{Experiment \ 2}$, we investigate the phase transition behavior and iteration efficiency of HT-RCGD to evaluate its sample complexity performance under a minimum frequency separation constraint. The signal length is fixed at $N=70$, while the number of observed samples $M$ varies in the set $\left\{5,8,\cdots,68\right\}$ and the sparsity level $K\in\left\{1,3,\cdots,37\right\}$. The spectral-sparse signal is considered successfully recovered if $\text{NMSE}\leq10^{-6}$. The success rate is computed by averaging the results over $50$ Monte Carlo trials for each $\left\{M,K\right\}$ pair. The phase transition results when frequencies have a minimum separation of $1.5/N$ are shown in Fig.~\ref{fig.PT,with-sep}. The red solid and dashed lines represent the upper bounds in~\eqref{upper-bound0} and~\eqref{upper-bound}. It is observed that ANM has a smaller complete failure region than EMaC, highlighting the benefit of fully exploiting the signal structure. Partially due to the same reason, HT-PGD and HT-RCGD achieve a larger successful recovery region compared to H-PFIHT and SH-PGD.
\begin{figure}
\begin{center}
\begin{minipage}[b]{.3\linewidth}
\includegraphics[width=1.5in]{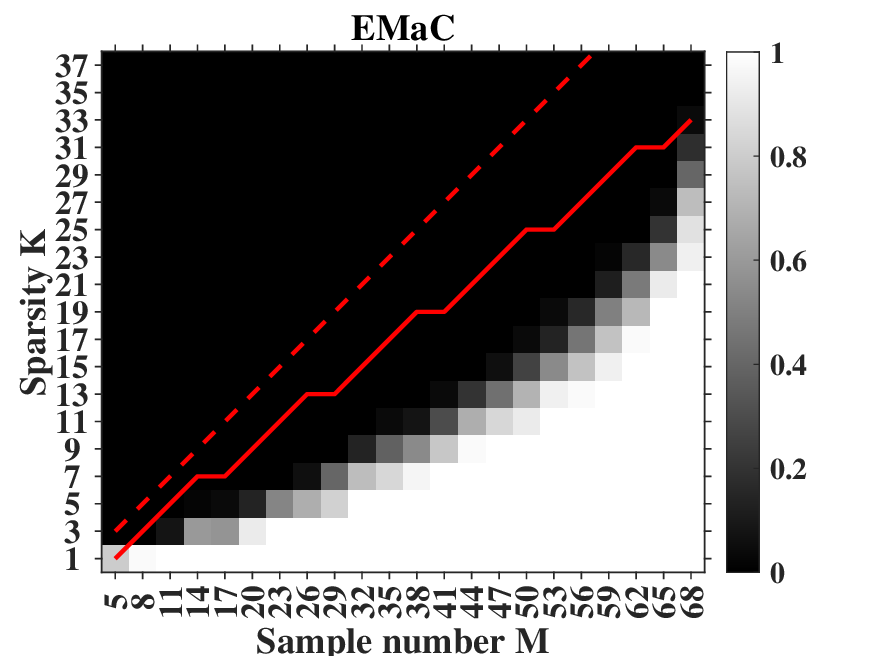}
\includegraphics[width=1.5in]{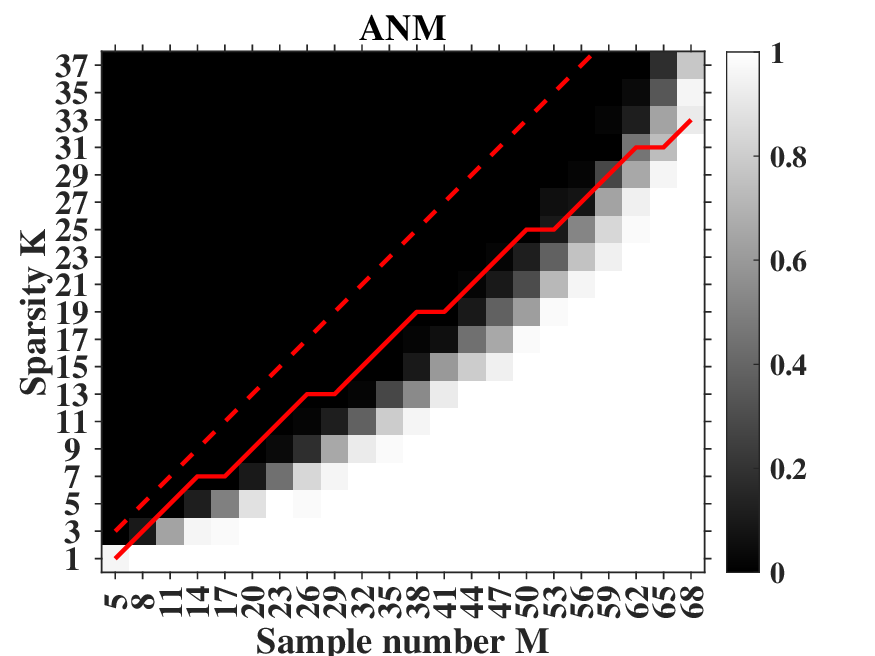}
\end{minipage}
\begin{minipage}[b]{.3\linewidth}
\includegraphics[width=1.5in]{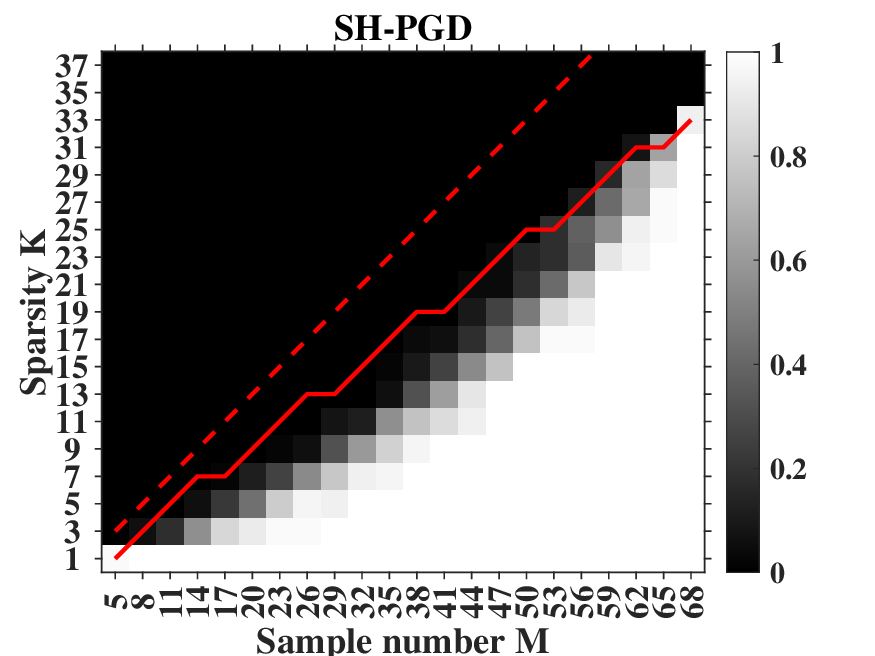}
\includegraphics[width=1.5in]{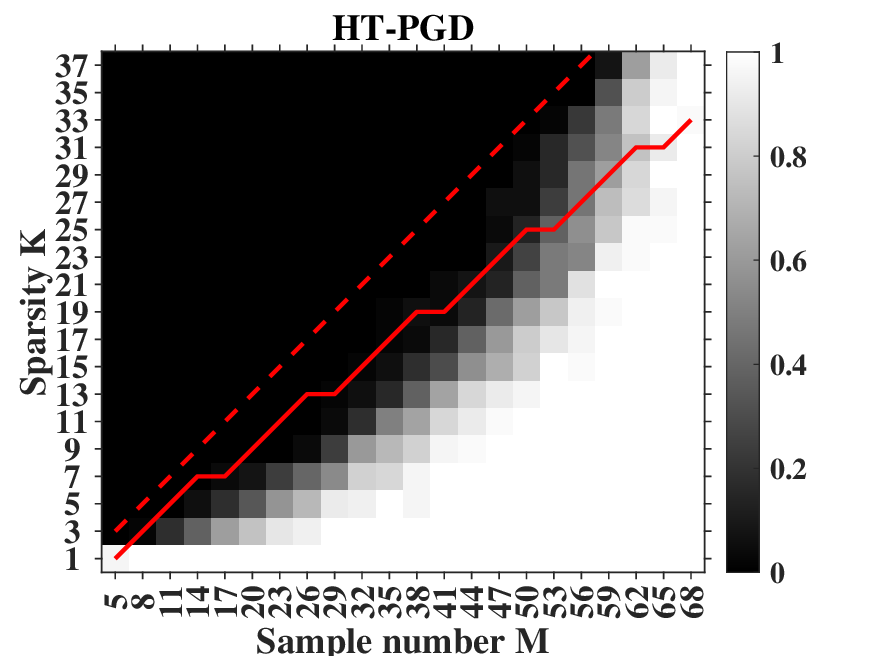}
\end{minipage}
\begin{minipage}[b]{.3\linewidth}
\includegraphics[width=1.5in]{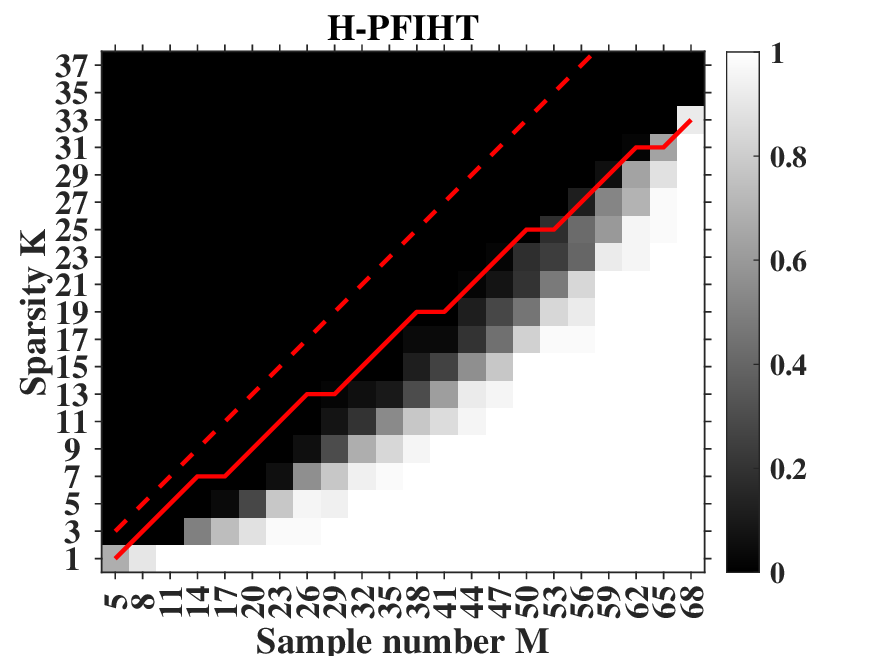}
\includegraphics[width=1.5in]{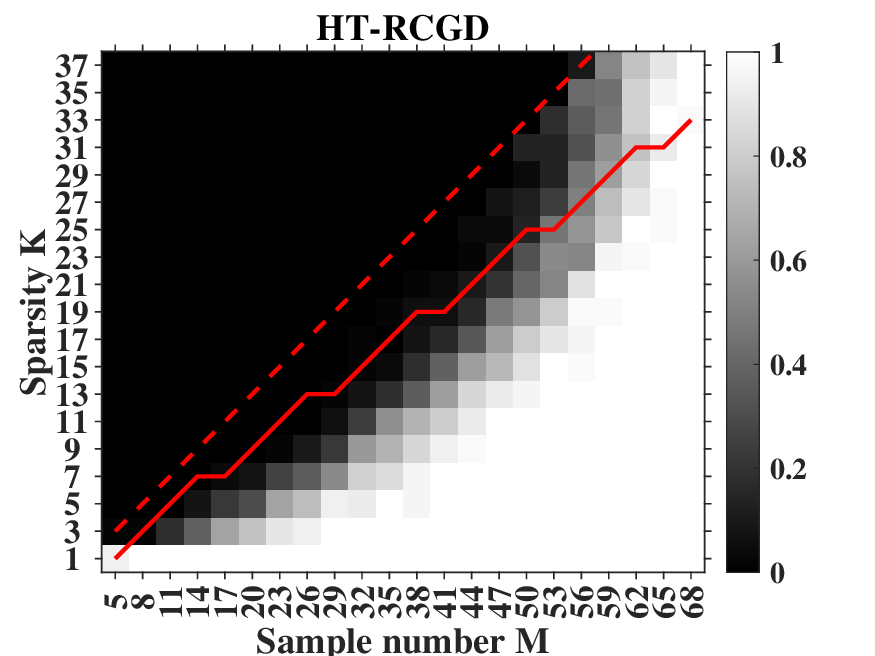}
\end{minipage}
\caption{Phase transition behaviors for frequencies with minimum separation. White means complete
success and black means complete failure.}
\end{center}
\label{fig.PT,with-sep}
\end{figure}
The iteration counts for SH-PGD, H-PFIHT, HT-PGD and HT-RCGD are shown in Fig.~\ref{fig.Iters,with-sep}. When the condition in~\eqref{upper-bound} is not satisfied (i.e., recovery is theoretically impossible), the iteration number is set to the maximum value for all algorithms to indicate unsuccessful recovery. It is observed that HT-RCGD demonstrates better iteration efficiency, especially when both the number of samples and spectral components are large.
\begin{figure}
\begin{center}
\begin{minipage}[b]{.4\linewidth}
\includegraphics[width=1.75in]{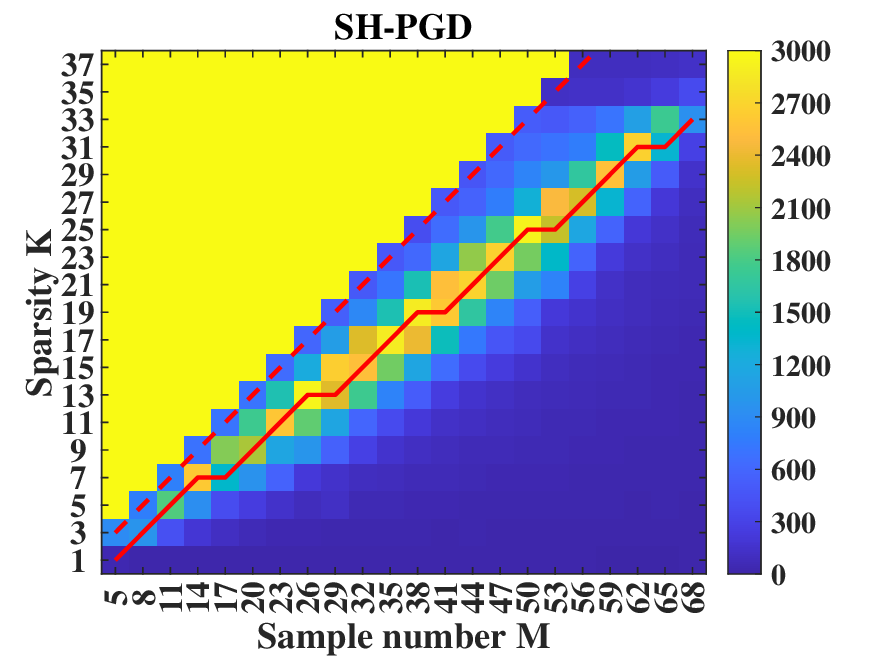}
\includegraphics[width=1.75in]{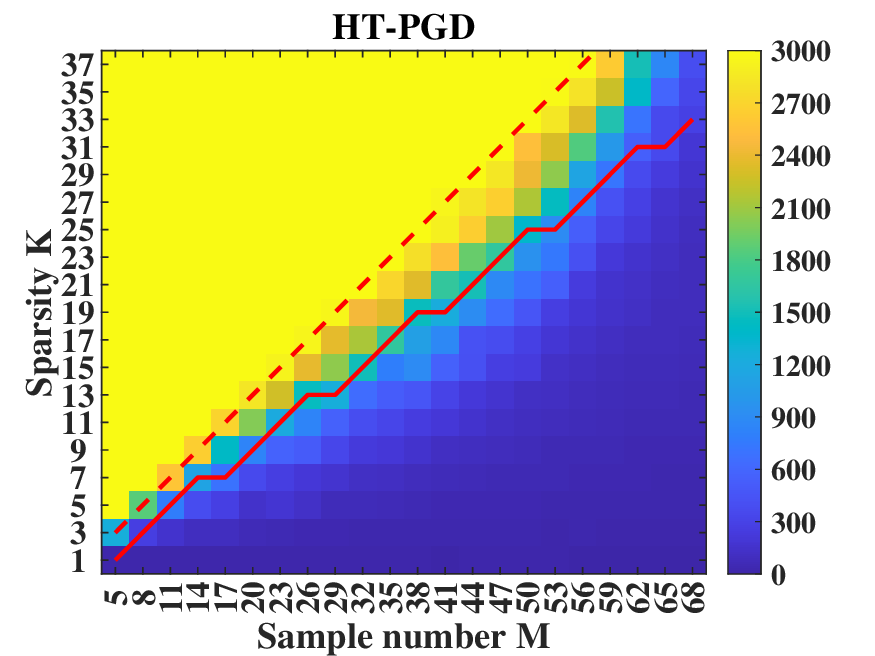}
\end{minipage}
\begin{minipage}[b]{.4\linewidth}
\includegraphics[width=1.75in]{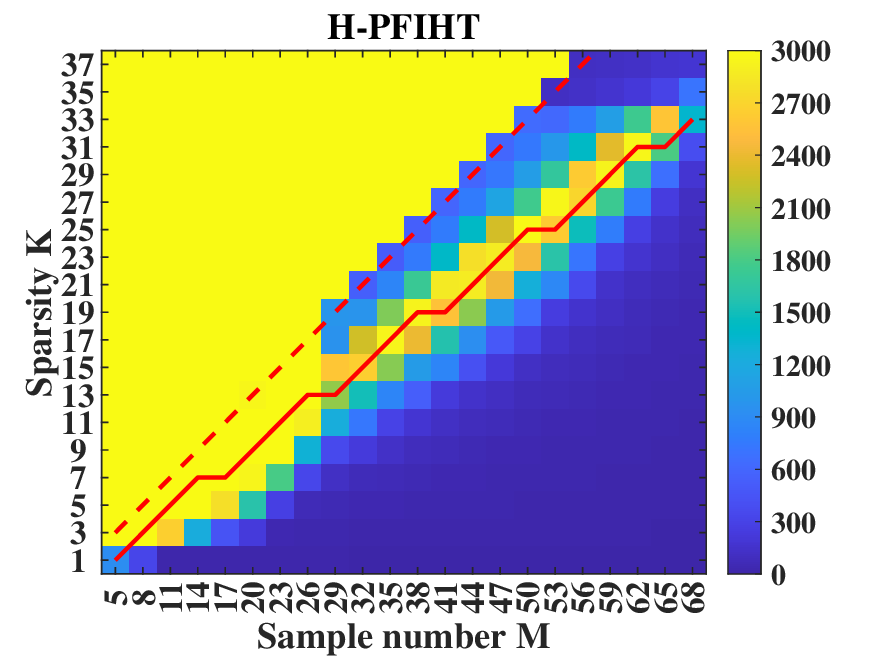}
\includegraphics[width=1.75in]{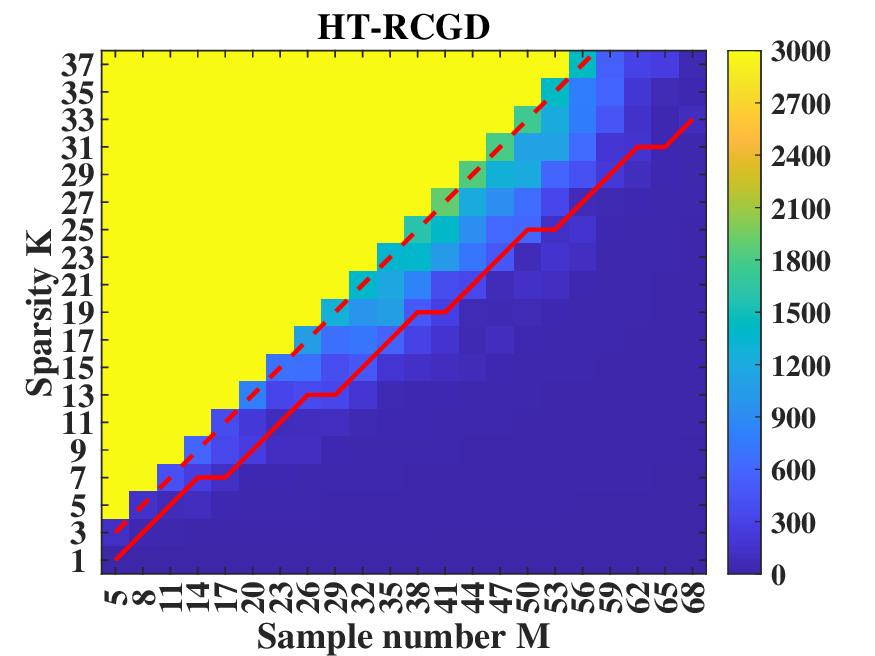}
\end{minipage}
\caption{Comparison of iteration numbers for varying $M$ and $K$ under frequency separation constraint.}
\end{center}
\label{fig.Iters,with-sep}
\end{figure}

In $\emph{Experiment \ 3}$, we evaluate the phase transition behavior and iteration efficiency of HT-RCGD in scenarios where the frequency components are randomly generated and may arbitrarily close to each other. The remaining experimental setup follows that of $\emph{Experiment \ 2}$. The phase transition results are presented in Fig.~\ref{fig.PT,without-sep}. In this setting, EMaC achieves a larger successful recovery region and a narrower phase transition boundary compared to ANM, confirming that the Hankel model solved via convex relaxation is robust to closely spaced frequencies, as previously reported in~\cite{yi2023separation}. Among all methods, HT-RCGD achieves the smallest complete failure region, demonstrating that HT-RCGD requires fewer samples for accurate signal recovery and remains robust even when frequencies are closely spaced.
\begin{figure}
\begin{center}
\begin{minipage}[b]{.3\linewidth}
\includegraphics[width=1.5in]{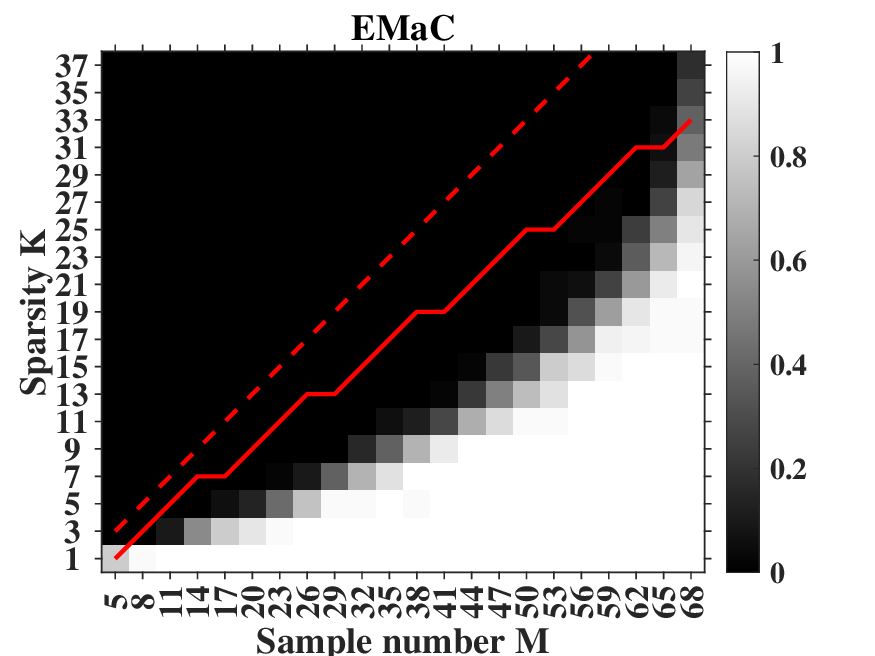}
\includegraphics[width=1.5in]{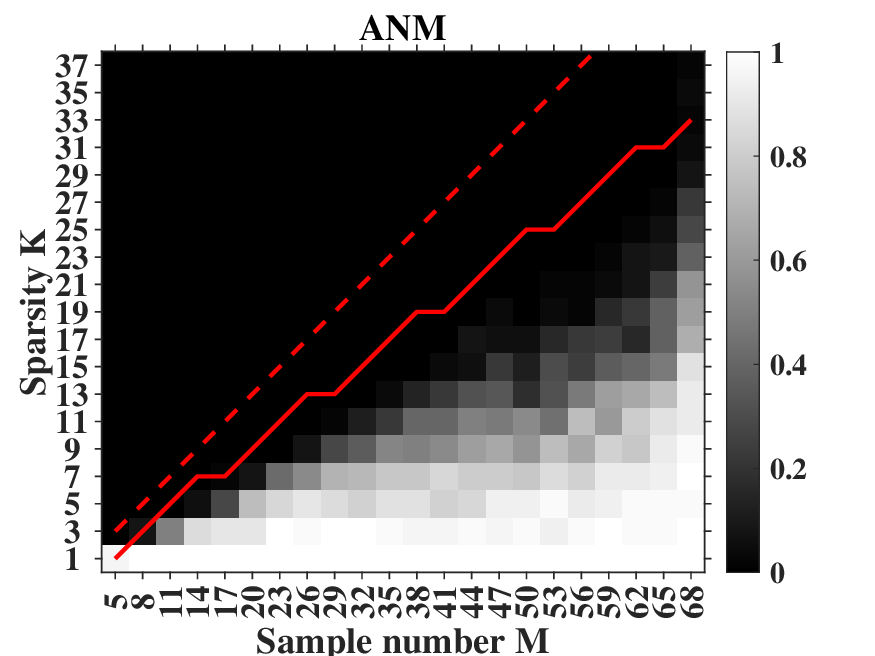}
\end{minipage}
\begin{minipage}[b]{.3\linewidth}
\includegraphics[width=1.5in]{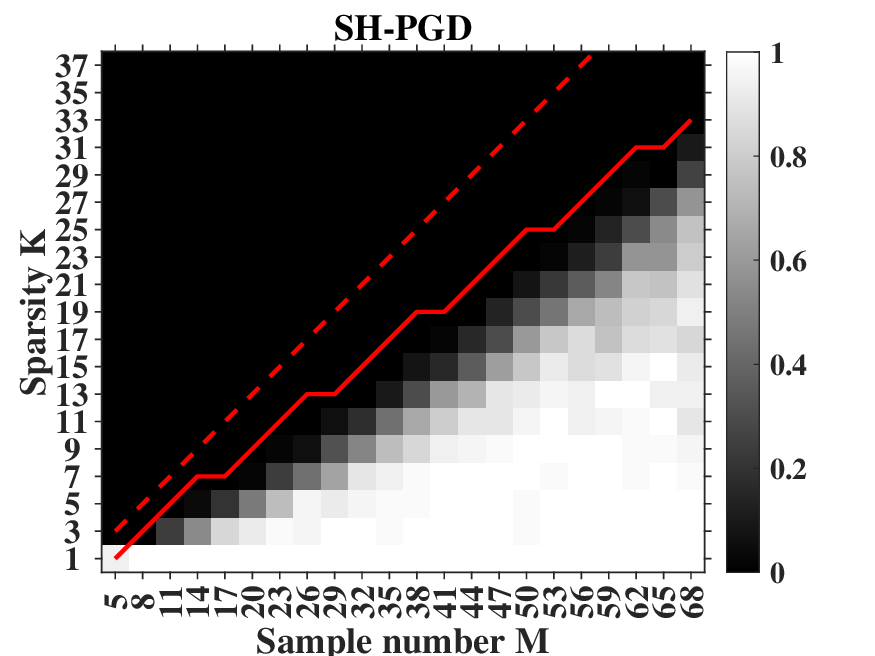}
\includegraphics[width=1.5in]{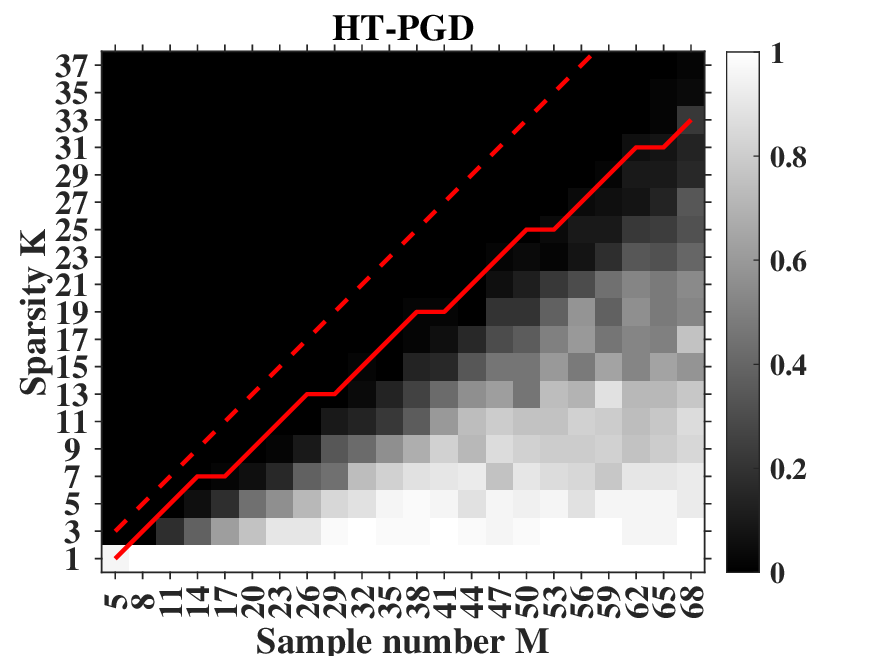}
\end{minipage}
\begin{minipage}[b]{.3\linewidth}
\includegraphics[width=1.5in]{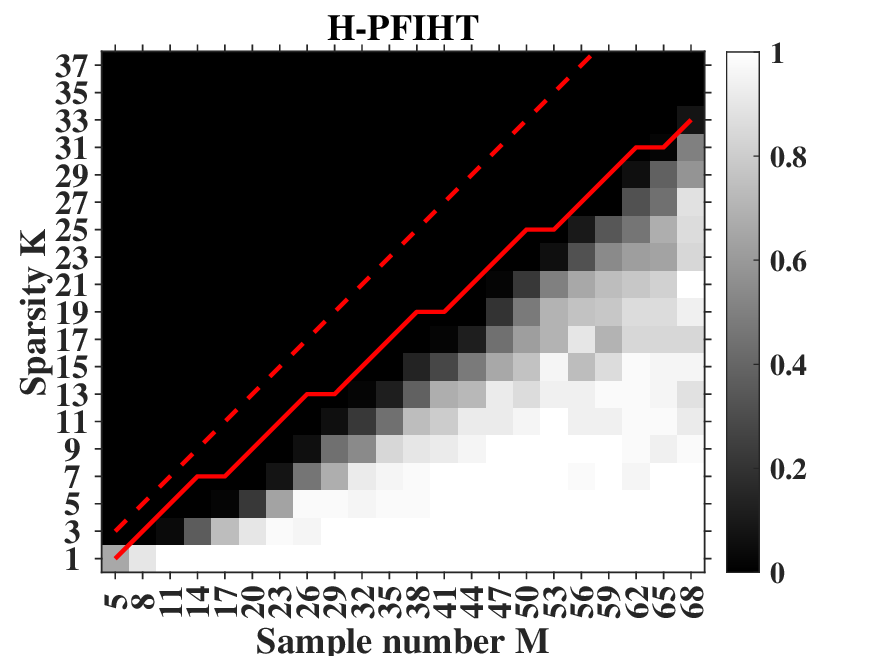}
\includegraphics[width=1.5in]{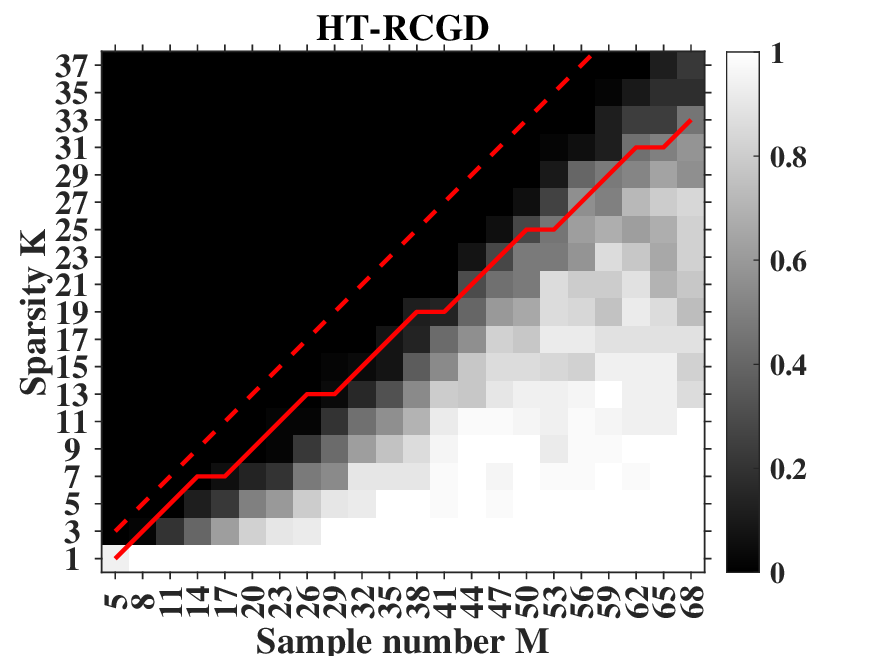}
\end{minipage}
\caption{Phase transition behavior for frequencies without minimum separation. White means complete
success and black means complete failure.}
\end{center}
\label{fig.PT,without-sep}
\end{figure}
The corresponding iteration counts for SH-PGD, H-PFIHT, HT-PGD and HT-RCGD are shown in Fig.~\ref{fig.Iters,without-sep}. HT-RCGD shows significantly better iteration efficiency than SH-PGD and HT-PGD. While it may require more iterations than H-PFIHT when the number of spectral components is small, HT-RCGD tends to require fewer iterations when both the number of samples and spectral components are large.
\begin{figure}
\begin{center}
\begin{minipage}[b]{.4\linewidth}
\includegraphics[width=1.75in]{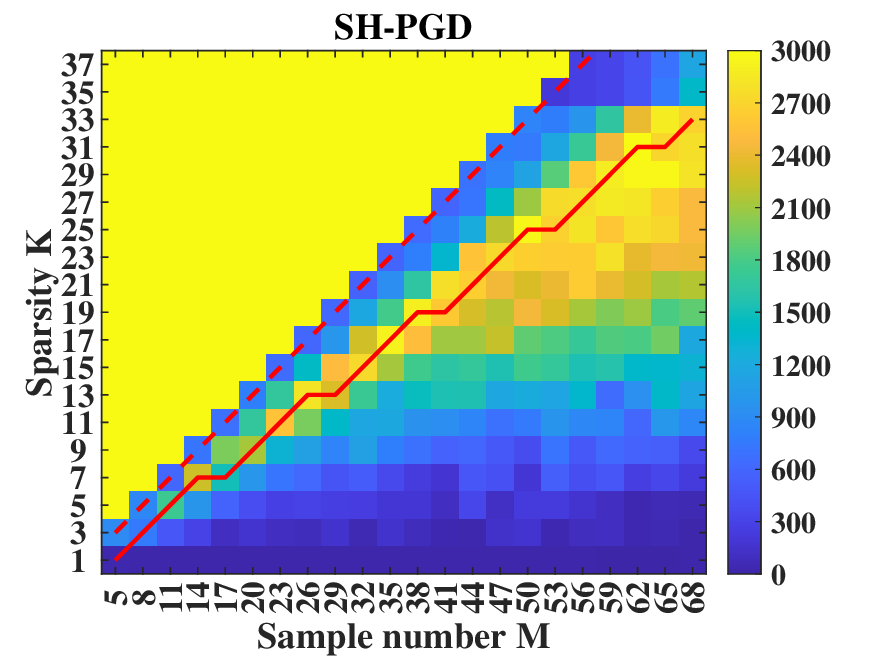}
\includegraphics[width=1.75in]{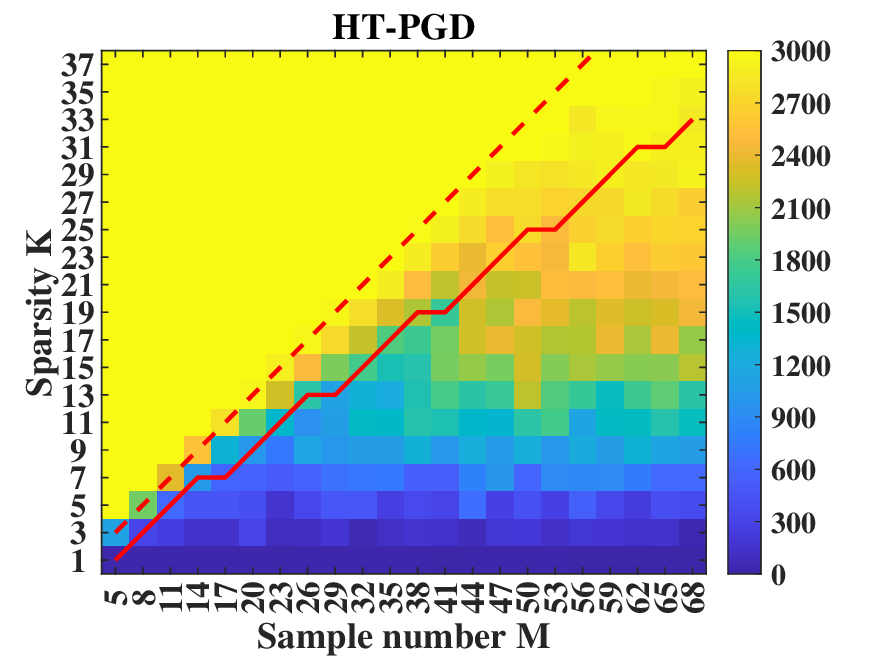}
\end{minipage}
\begin{minipage}[b]{.4\linewidth}
\includegraphics[width=1.75in]{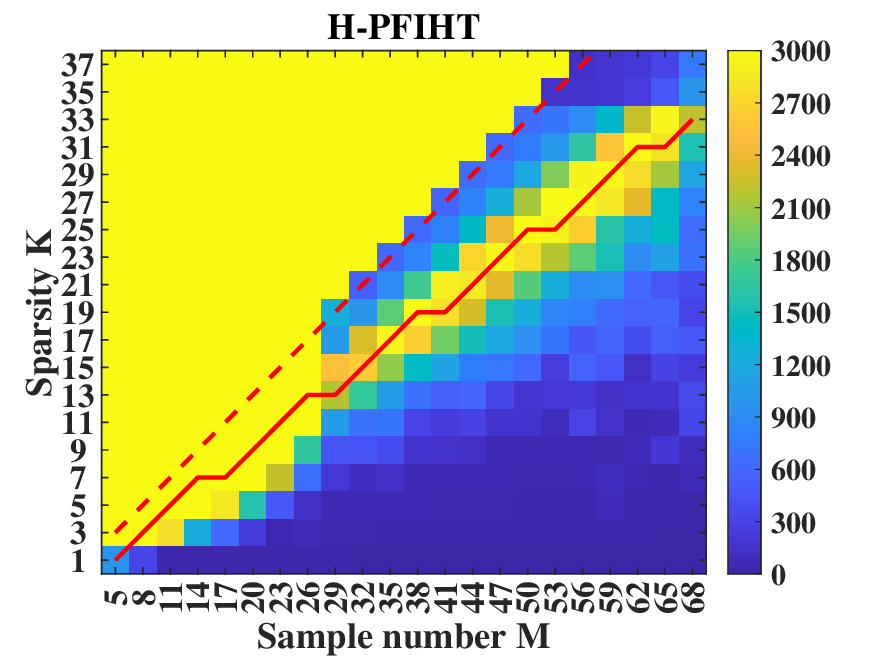}
\includegraphics[width=1.75in]{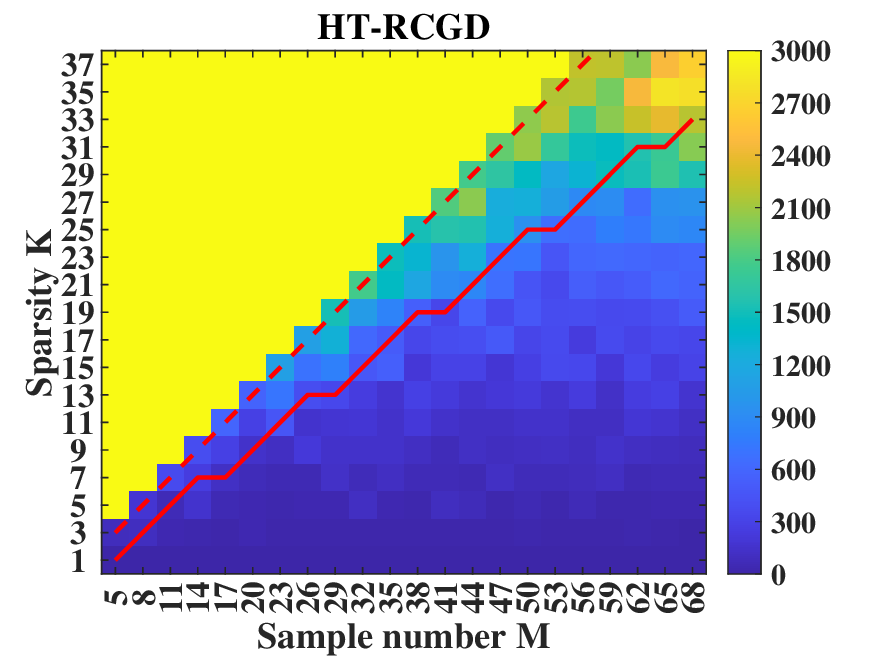}
\end{minipage}
\caption{Comparison of iteration numbers for varying $M$ and $K$ without frequency separation.}
\end{center}
\label{fig.Iters,without-sep}
\end{figure}

In $\emph{Experiment \ 4}$, we assess the computational efficiency of HT-RCGD by measuring its execution time for different signal lengths. We vary the signal length $N$ from $50$ to $20000$ and set the number of observed samples as $M=\left\lfloor0.8N\right\rfloor$, where $\left\lfloor\cdot\right\rfloor$ denotes the floor function (rounding down to the nearest integer). We randomly generate $K=6$ frequencies with a minimum separation of $1/N$. The average computational time, computed over $50$ Monte Carlo trials for successful signal recovery, is plotted in Fig.~\ref{fig.Running Time}. HT-RCGD demonstrates significantly faster computational performance compared to EMaC and ANM, validating the time complexity analysis and highlighting its capability to handle large-scale signal reconstruction problems. While HT-RCGD is slightly slower than SH-PGD and H-PFIHT, as it requires additional matrix computations to calculate the Riemannian conjugate gradient and the optimal stepsize at each iteration, HT-RCGD outperforms HT-PGD when $N\geq500$.
\begin{figure}
\begin{center}
\includegraphics[width=2.6in]{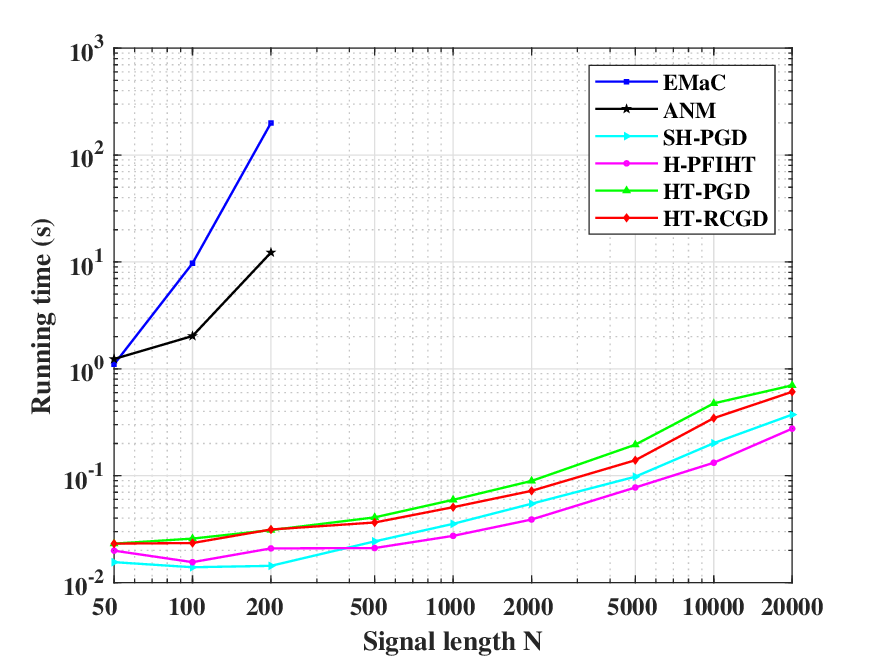}
\caption{Execution time versus signal length.}
\end{center}
\label{fig.Running Time}
\end{figure}

In summary, the numerical results demonstrate that HT-RCGD achieves the best performance in terms of both accuracy and computational efficiency.

\section{Conclusion}
\label{Sec:conclusion}

In this paper, we proposed an equivalence class optimization framework on a quotient manifold for spectral compressed sensing, based on the low-rank Hankel-Toeplitz model. The problem is solved using a Riemannian conjugate gradient method, termed as HT-RCGD. HT-RCGD fully exploits both the structure of undamped spectral-sparse signals and the geometric properties of the quotient manifold. Extensive numerical experiments demonstrate that HT-RCGD outperforms state-of-the-art algorithms in terms of accuracy and convergence rate.

%
%
%

\bibliographystyle{siamplain}
\bibliography{strings}
\end{document}